\def\R{{\Bbb R}}
\def\BB{{\Bbb B}}
\def\C{{\Bbb C}}
\def\D{{\Bbb D}}
\def\M{{\Bbb M}}
\def\N{{\Bbb N}}
\theoremstyle{thmstyleone}%
\newtheorem{theorem}{Theorem}
\newtheorem{proposition}[theorem]{Proposition}%
\newtheorem{lemma}[theorem]{Lemma}%
\theoremstyle{thmstyletwo}%
\theoremstyle{thmstylethree}%
\newtheorem{example}{Example}
\newtheorem{remark}{Remark}
\begin{document}

\title[Characterizing Weighted Composition Operators on Weighted-Type High-Order Growth Spaces
]{Characterizing Weighted Composition Operators on Weighted-Type High-Order Growth Spaces via the Component Function $\varphi_p$
}


\author*[1,2]{\fnm{Thai} \sur{Thuan Quang}}\email{thaithuanquang@qnu.edu.vn}

%

\affil*[1]{\orgdiv{Department of Mathematics and Statistics}, \orgname{Quy Nhon University}, \\ \orgaddress{\street{170 An Duong Vuong}, \city{Quy Nhon}, 
\state{Binh Dinh}, \country{Viet Nam}}}

%


\abstract{
Let $\psi$ be a holomorphic function on the open unit ball $\BB \subset \C^N$, and let $\varphi$ be a holomorphic self-map of $\BB$, associated with normal weights $\nu$ and $\mu$. We consider the weighted composition operator $
W_{\psi,\varphi} : \mathcal H_\nu^{(n)} \to \mathcal H_\mu^{(m)}, \quad n,m \in \N,$
acting between weighted-type high-order growth spaces. Unlike previous studies that involve the full symbol $\varphi$, this paper establishes characterizations of the boundedness, compactness, and asymptotic norm estimates of $W_{\psi,\varphi}$ \emph{solely in terms of the symbol $\psi$ and a single component function $\varphi_p$ of $\varphi$}, offering a new approach to the analysis of such operators.
}

\keywords{Weighted composition operator,  Bloch spaces, Zygmund spaces, growth spaces, boundedness, 	compactness}



\maketitle

\section{Introduction} \label{sec_1} 

Let $\mathbb{B} = \{z\in \BB: |z| =\sqrt{\langle z, z\rangle}< 1\}$ denote the open unit ball in $\mathbb{C}^N$. A continuous radial weight 
$\omega:\mathbb{B}\to(0,\infty)$ is called \emph{normal} if there exist constants 
$0\le \delta <1$ and $0<a<b<\infty$ such that
\begin{equation}\label{w_1}
 \frac{\omega(t)}{(1 - t)^a} \ \text{is decreasing on $[\delta, 1)$},\quad \lim_{t\to1}\frac{\omega(t)}{(1 - t)^a}=0, \tag{$ W_1 $}
\end{equation}
\begin{equation}\label{w_2}
	\frac{\omega(t)}{(1 - t)^b} \ \text{is increasing on $[\delta, 1)$},\quad \lim_{t\to1}\frac{\omega(t)}{(1 - t)^b} =\infty. \tag{$ W_2 $}
\end{equation}

Such weights are fundamental in controlling boundary growth of holomorphic functions.

For $n\in \mathbb{N}_0$, the weighted-type $n$-order growth space associated with a normal weight $\omega$ is defined by
\[
\mathcal H_\omega^{(n)} := \Big\{ f\in H(\mathbb{B}) : \|f\|_{H_\omega^{(n)}} 
= |f(0)| + \sup_{z\in\mathbb{B}} \omega(z) \, |R^{(n)} f(z)| < \infty \Big\},
\]
where $R^{(n)}$ denotes the $n$-th radial derivative. This class of spaces was introduced and systematically studied by T.~T.~Quang \cite{Qu1}, who also initiated the study of composition operators acting on them. These spaces provide a natural high-order extension of classical growth, Bloch-type, and Zygmund-type spaces, forming a flexible framework for operator theory.

Given $\psi \in H(\mathbb{B})$ and a holomorphic self-map 
$\varphi = (\varphi_1,\dots,\varphi_N) \in S(\BB)$ of $\mathbb{B}$, we consider the weighted composition operator
\[
W_{\psi,\varphi}: \mathcal H_\nu^{(n)} \to \mathcal H_\mu^{(m)}, \quad 
 f \mapsto \psi \cdot (f\circ \varphi),
\]
where $\nu, \mu$ are normal weights and $n,m \in \mathbb{N}_0$.

Classical studies have typically characterized boundedness and compactness of $W_{\psi,\varphi}$ in terms of the full self-map symbol $\varphi$. More recently, it has been observed that these properties may be reduced to dependence on a single component function $\varphi_p$ of $\varphi$. While this reduction has been explored in low-order settings such as Bloch- and Zygmund-type spaces (see \cite{LDQ1, LDQ2, LQ}), it has not yet been extended to the broader class of high-order growth spaces.

The present paper advances this direction. We show that the boundedness, compactness, and asymptotic norm estimates of $W_{\psi,\varphi}$ can be characterized entirely in terms of the symbol $\psi$ and one component $\varphi_p$ of $\varphi$. This provides a novel approach to the study of weighted composition operators and continues the line of investigation initiated in Quang's earlier work.

Outside the Introduction, the paper is organized as follows. Section \ref{sec_2} reviews weighted-type high-order growth spaces, normal weights, and basic preliminaries. Section \ref{sec_3} presents auxiliary results. Section \ref{sec_4} introduces the main conditions on the symbols. Section \ref{sec_5} establishes characterizations of boundedness and compactness. 

	Throughout this paper, we use the notions $a \lesssim b$ and $a \asymp b$ for non negative quantities $a$ and $b$ to mean $a \le Cb$ and, respectively, $b/C \le a \le Cb$ for some inessential constant $C >0.$

\section{Preliminaries}\label{sec_2}

Throughout this paper we will use the following symbols:  
\[\aligned
&|\vec{k}| = k_1 +\cdots + k_j, \quad \vec{k}! = k_1!\cdots k_j!\quad\text{for  multi-indeces}\ \vec{k} =(k_1, \ldots, k_j) \in \N_0^j, \\
&K_{i,j} =  \Big\{\vec{k}  \in  \N^j: \  |\vec{k}|=i\Big\},  \quad K^0_{i,j} =  \Big\{\vec{k}  \in  \N_0^j: \  |\vec{k}|=i\Big\}, \\
&C^n_{\vec{k}} = \binom{n}{k_1,\ldots,k_j} = \frac{n!}{\vec{k}!}\quad\text{with $\vec{k} \in K^0_{i,j}$}, \\
&L_{j}  = \Big\{\vec{l} = (l_1, \ldots, l_j) \in  \N^j: \  1\le \l_1, \ldots, l_j \le N\Big\}, \\  
&\frac{\partial^j}{\partial z_{\vec{l}}} = \frac{\partial^j}{\partial z_{l_1}\cdots\partial z_{l_j}} \quad\text{with $\vec{l} \in L_{j}$}.
\endaligned\]

For a normal weight $\omega $ on $\BB, $ for every $z \in \BB,$ we denote
\[
  I^k_\omega(z) :=\int_0^{|z|}\int_0^{t_{k-1}}\cdots\int_0^{t_1}\frac{1}{\omega(t)}dtdt_1\ldots dt_{k-1},  \quad   k \ge 1.\]

From the definition of radial derivative and a direct calculation, the following so-called \textit{Newton-Leibniz formula} holds for every $\psi, f \in H(\BB)$ and $\varphi=(\varphi_1, \ldots, \varphi_N) \in S(\BB):$
 \[R^{(n)}(\psi \cdot(f \circ \varphi))(z)=\sum_{i=0}^n\binom{n}{i} R^{(n-i)}\psi(z)R^{(i)}(f \circ \varphi)(z), \quad z \in \BB,\] 
 where \begin{equation}\label{eq_Bruno}
R^{(n)}(f \circ \varphi)(z) =\sum_{j=1}^n \sum_{\vec{l} \in L_{j}} \left(\frac{\partial^j f(\varphi(z))}{\partial z_{\vec{l}}}
\sum_{\vec{k} \in K_{n,j}} C_{\vec{k}}^{n} \prod_{t=1}^j R^{(k_t)} \varphi_{l_t}(z)\right).\end{equation} 
The formula \eqref{eq_Bruno} obtained from \cite{HJ}.

\subsection{Weighted-Type High-Order Growth Spaces}\label{sec_3}
In the subsection, we review some fundamental aspects of weighted high-order growth spaces and certain computational formulas that have been established in \cite{Qu1}.

 Let $\omega$ be a normal weight on $\BB.$  
 For $n \in \N_0 = \N \cup \{0\},$ we can easily verify that the   weighted-type $n$-order growth space
 \[\mathcal{H}^{(n)}_{\omega} := \bigg\{f \in H(\BB): \|f\|_{\mathcal{H}^{(n)}_{\omega}} := |f(0)|+\|f\|_{s\mathcal{H}^{(n)}_{\omega}}<\infty\bigg\}\]
 is a Banach space when equipped with the norm 
 $\|\cdot\|_{\mathcal{H}^{(n)}_{\omega}},$ 
where
\[\|f\|_{s\mathcal{H}^{(n)}_{\omega}} := \sup_{z \in \BB}\omega(z)|R^{(n)}f(z)|,\]
where $R^{(0)}f = f, R^{(1)}f(\cdot) = \langle \cdot, \nabla f(\cdot)\rangle, R^{(n)}f = R(R^{(n-1)}f). $

For each $n \ge 0$, the space $\mathcal H_\omega^{(n+1)}$ is contained in $\mathcal H_\omega^{(n)}$. Moreover, the norm 
$\|\cdot\|_{\mathcal H_\omega^{(n)}}$ on $\mathcal H_\omega^{(n+1)}$ is  controlled by the norm 
$\|\cdot\|_{\mathcal H_\omega^{(n+1)}}$; in other words, there exists a constant $C>0$ such that
\begin{equation}\label{compair_norm}
\|f\|_{\mathcal H_\omega^{(n)}} \le C \, \|f\|_{\mathcal H_\omega^{(n+1)}}, \quad   f \in \mathcal H_\omega^{(n+1)}.
\end{equation}
Furthermore, if $f \in \mathcal H_\omega^{(n)}$, then for every $k \le n$, the $k$-th radial derivative 
$R^{(k)} f$ belongs to the space $\mathcal H_\omega^{(n-k)}$ \cite[Proposition 3.1]{Qu1}.


  For each $z \in \BB,$   we define \textit{the point-evaluation functional $ \delta^{\mathcal{H}^{(n)}_{\omega}}_z $ at $ z $} as follows: 
\[\delta^{\mathcal{H}^{(n)}_{\omega}}_z(f) := f(z),\quad f \in \mathcal{H}^{(n)}_{\omega}.\]

\begin{proposition}[\cite{Qu1}, Proposition 3.3]\label{exam_E_delta} We have  the following estimates for the point evaluation functional:
	\[\Big\|\delta^{\mathcal{H}^{(0)}_{\omega}}_z\Big\| = \dfrac{1}{\omega(z)};\quad  \Big\|\delta^{\mathcal{H}^{(n)}_{\omega}}_z\Big\| \asymp 1+I^n_\omega(z), \quad n\ge 1.\]
 \end{proposition}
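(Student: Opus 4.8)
The plan is to establish matching upper and lower bounds for the operator norm $\|\delta_z^{\mathcal H^{(n)}_\omega}\| = \sup\{\,|f(z)| : \|f\|_{\mathcal H^{(n)}_\omega}\le 1\,\}$ in each case. The case $n=0$ is immediate: from $\omega(z)|f(z)|\le\sup_{w}\omega(w)|f(w)|\le\|f\|_{\mathcal H^{(0)}_\omega}$ one reads off $\|\delta_z^{\mathcal H^{(0)}_\omega}\|\le 1/\omega(z)$, and the reverse inequality follows by testing against standard normalized peak functions for the normal weight $\omega$, chosen so that their weighted supremum is attained at $z$ while their value at the origin is negligible compared with their value at $z$; sharpening the peak drives the quotient $|f(z)|/\|f\|_{\mathcal H^{(0)}_\omega}$ to $1/\omega(z)$. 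Thus for $n\ge 1$ the real content is the two-sided estimate $\|\delta_z^{\mathcal H^{(n)}_\omega}\|\asymp 1 + I^n_\omega(z)$.

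For the upper bound I would work along the radial segment. Writing $\zeta=z/|z|$ and $h(r)=f(r\zeta)$, a direct computation gives the radial recovery identity
\[ f(z) = f(0) + \int_0^{|z|}\frac{R^{(1)}f(r\zeta)}{r}\,dr, \]
and, since $R^{(n)}f(r\zeta)=(r\,d/dr)^n h(r)$, iterating produces a representation of $f(z)$ as a polynomial in the values $R^{(k)}f(0)$, $0\le k\le n-1$, plus an $n$-fold iterated radial integral of $R^{(n)}f$ whose kernel carries factors $1/r$. The polynomial part is controlled by the norm: the seminorm identity $\|R^{(k)}f\|_{s\mathcal H^{(n-k)}_\omega}=\|f\|_{s\mathcal H^{(n)}_\omega}$ and \cite[Proposition~3.1]{Qu1} give $|R^{(k)}f(0)|\lesssim\|f\|_{\mathcal H^{(n)}_\omega}$, which supplies the additive constant $1$. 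For the integral part I would insert the pointwise bound $|R^{(n)}f(r\zeta)|\le\|f\|_{s\mathcal H^{(n)}_\omega}/\omega(r)$ and split each radial integration at a fixed radius (say $r=1/2$): on the inner region the factors $1/r$ are tamed by the vanishing of $R^{(n)}f$ at the origin, so the whole contribution is bounded by a constant multiple of $\|f\|_{\mathcal H^{(n)}_\omega}$ and is again absorbed into the constant $1$; on the outer region every factor $1/r$ is at most $2$, so the iterated integral is dominated, up to a constant, by the plain iterated integral $I^n_\omega(z)$. Combining yields $|f(z)|\lesssim(1+I^n_\omega(z))\,\|f\|_{\mathcal H^{(n)}_\omega}$.

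For the lower bound I would produce explicit test functions. The constant $1$ is captured by $f\equiv 1$, for which $R^{(n)}f\equiv 0$ and $\|f\|_{\mathcal H^{(n)}_\omega}=|f(z)|=1$. To capture $I^n_\omega(z)$ I would first reduce, by composing with a unitary $U$ sending $z$ to $|z|e_1$ (an isometry of $\mathcal H^{(n)}_\omega$, since $\omega$ is radial and $R^{(n)}$ is unitarily invariant), to the case $z=|z|e_1$, and then take a function of the single variable $w_1$, namely $f(w)=u(w_1)$. Then $R^{(n)}f(w)=(w_1\,d/dw_1)^n u(w_1)$, and one builds $u$ (through a power series $\sum c_k w_1^k$, or as an explicit $n$-fold radial antiderivative of $1/\omega$) so that $|(w_1\,d/dw_1)^n u|\asymp 1/\omega(|w_1|)$ radially; the normality of $\omega$, in particular the monotonicity of $\omega$ on $[\delta,1)$ forced by \eqref{w_1}--\eqref{w_2}, then yields $\|f\|_{s\mathcal H^{(n)}_\omega}\asymp 1$ while $|f(z)|=|u(|z|)|\asymp I^n_\omega(z)$. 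Taking the larger of this function and a multiple of the constant function gives $\|\delta_z^{\mathcal H^{(n)}_\omega}\|\gtrsim 1+I^n_\omega(z)$.

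The main obstacle is exactly this construction of test functions: one must manufacture a holomorphic $u$ whose iterated radial derivative matches $1/\omega$ up to constants and simultaneously identify its value at $|z|$ with the iterated integral $I^n_\omega(z)$, which is precisely where the two defining inequalities \eqref{w_1}--\eqref{w_2} of a normal weight (doubling-type control of $\omega$ and of its iterated integrals) are indispensable. A secondary technical point, flagged above, is that the exact radial recovery formula carries spurious $1/r$ factors absent from $I^n_\omega$; handling this cleanly requires the inner/outer splitting together with the removability of the $1/r$ singularity at the origin, so that only the boundary region, where $1/r\asymp 1$, contributes to the $I^n_\omega$ term.
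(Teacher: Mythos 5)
First, a structural point: this paper does not prove Proposition~\ref{exam_E_delta} at all --- it is imported verbatim from \cite{Qu1}, so your proposal can only be measured against the standard argument for such estimates, which your outline (radial integration for the upper bound, explicit test functions for the lower bound, unitary reduction to a one-variable problem) does follow. Within that outline, the upper bound for $n\ge 1$ is essentially sound, and can even be simplified: since $R^{(k)}f(0)=\langle 0,\nabla R^{(k-1)}f(0)\rangle=0$ for every $k\ge 1$, your ``polynomial part'' reduces to the single term $f(0)$, and in the inner/outer splitting the mixed regions are handled by the observation that the successive integrations $\int_0^{t_j}dt_{j+1}=t_j$ cancel the factors $1/t_j$ one at a time (using $|R^{(n)}f(w)|\lesssim |w|\,\|f\|_{\mathcal H^{(n)}_\omega}$ near $0$, which follows from the Schwarz lemma applied on slices). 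The genuine gap is the lower bound for $n\ge1$: the entire estimate hinges on manufacturing a holomorphic $u$ on $\D$ with $|(w_1\,d/dw_1)^n u(w_1)|\asymp 1/\omega(|w_1|)$ and, crucially, $|u(|z|)|\gtrsim I^n_\omega(z)$ without cancellation, and you leave exactly this step as a black box. It is not routine: $1/\omega$ is not holomorphic, so your parenthetical suggestion to take ``an explicit $n$-fold radial antiderivative of $1/\omega$'' does not produce a holomorphic function (such an expression depends on $|w_1|$ and $\arg w_1$ separately). What is needed is a Shields--Williams-type lemma for normal weights: there exists a power series $g$ with non-negative Taylor coefficients such that $g(t)\asymp 1/\omega(t)$ on $[0,1)$, whence $|g(\lambda)|\le g(|\lambda|)\lesssim 1/\omega(|\lambda|)$ on $\D$; one then takes $u$ to be an iterated radial antiderivative of $w_1^n g(w_1)$ (the factor $w_1^n$ removes the $1/r$ singularities), and positivity of the coefficients gives $u(|z|)\gtrsim I^n_\omega(z)$ at the real point $|z|$ after your unitary reduction. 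Without this lemma, or a substitute construction, the proof is incomplete precisely at the point you yourself flag as ``the main obstacle.''

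A second, smaller defect: your $n=0$ argument cannot deliver the stated \emph{equality}. Peak functions for a general normal weight yield only $\|\delta_z^{\mathcal H^{(0)}_\omega}\|\asymp 1/\omega(z)$; exact equality is the assertion that $\omega$ coincides with its associated weight $\tilde\omega(z)^{-1}=\sup\{|f(z)|:\sup_w\omega(w)|f(w)|\le 1\}$, which is a strictly stronger property than normality. Worse, with the norm as defined in this paper, $\|f\|_{\mathcal H^{(0)}_\omega}=|f(0)|+\sup_w\omega(w)|f(w)|\ge (1+\omega(0))|f(0)|$, so $\|\delta_0^{\mathcal H^{(0)}_\omega}\|\le (1+\omega(0))^{-1}<1/\omega(0)$ and the equality literally fails at $z=0$. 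So what your mechanism proves --- and what the rest of the paper actually uses --- is the two-sided estimate, not the identity; you should either state it that way or isolate the extra hypothesis on $\omega$ under which equality holds.
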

  For every $f \in \mathcal{H}^{(n)}_\omega$ and every $z\in \BB,$ the following is true for every $\vec{l} \in L_j,$ (see \cite[Theorem 3.3]{Qu1}: 
\begin{equation}\label{eq_est_parial_f}
\aligned
&\bigg|\frac{\partial^j f(z)}{\partial z_{\vec{l}}}\bigg| 
\lesssim \Big\|\delta_z^{\mathcal{H}^{(n-j)}_\omega}\Big\|\|f\|_{\mathcal{H}^{(n)}_\omega}\lesssim \Big\|\delta_z^{\mathcal{H}^{(n)}_\omega}\Big\|\|f\|_{\mathcal{H}^{(n)}_\omega}, \quad   \text{for}\  j = 0, \ldots, n; \\
&\bigg|\frac{\partial^{n+k} f(z)}{\partial z_{\vec{l}}}\bigg| 
\lesssim \frac{1}{\omega(z)(1-\|z\|^2)^k}\|f\|_{\mathcal{H}^{(n)}_\omega}, \quad  \text{for}\ k = 1, 2, \ldots. \endaligned
\end{equation}
 
%
 The following inequality is extracted from estimates (4.3) in \cite{Qu1}:
 \begin{equation}\label{eq_est_parial_f_more2}
 \mu(z)|R^{(n)}W_{\psi,\varphi}(f)|  \le \sum_{j=0}^n\bigg|\frac{\partial^j f(\varphi(z)) }{\partial z_{\vec{l}}}
 \bigg| \mu(z)\bigg|\sum_{i=j}^n\binom{n}{i}R^{n-i}(\psi(z)) \mathfrak B_{i,j}(R\varphi(z))\bigg|. \\
\end{equation}

%


\subsection{The set $\widetilde S_p(\BB) $ }
%
  In this section we recall the set 
$S^*_p(\BB)$ (see \cite{LDQ1}) and simultaneously introducing the set  $\widetilde S_p(\BB)$
 together with some of its geometric characterizations. We denote
	\[\aligned
	S^*_p(\BB) &= \Big\{\varphi \in S(\BB): \ \varphi(\BB) \supseteq \D_p := \big\{\lambda e_p:\ \lambda \in \D\big\}\Big\}, \\
	\widetilde S_p(\BB) &= \Big\{\varphi \in S(\BB):\ 0\in \varphi(\BB),    \forall z \in \BB, \exists z' \in \BB: |\varphi_p(z')| = |\varphi(z)|\Big\}.
	\endaligned\]
for  $p\in \{1, \ldots, N\}.$
  
  Geometrically speaking, for $\varphi \in S^*_p(\BB),$ the image $\varphi(\BB)$        covers the entire unit disc in the $p$-th coordinate plane. Meanwhile, it is easy to see that
  \[\widetilde S_p(\BB) = \bigg\{\varphi \in S(\BB):\ 0\in \varphi(\BB),  \sup_{x \in \varphi_p(\BB)}|x| = \sup_{z \in  \varphi(\BB)}|z|\bigg\}.\]
  Indeed, it is obvious that $\sup_{x\in \varphi_p(\BB)}|x| \le \sup_{z\in \varphi(\BB)}|z|.$ Conversely, for every $z\in \varphi(\BB)$
  there exists $z'\in \BB$ such that $|\varphi(z)| = |\varphi_p(z')|\le \sup_{x\in \varphi_p(\BB)}|x|.$ This implies that for every $\varphi \in \widetilde S_k(\BB)$, the image $\varphi(\BB)$ necessarily contains the origin, and the maximal distance from $0$ to the boundary remains unchanged when projected onto $\D_k$. Consequently, we obtain the strict inclusion $\widetilde S_k(\BB) \supsetneq S_k^*(\BB)$.
  

Examples are given below to support this inclusion.

\begin{example} In $\C^2,$ consider $\BB\subset \C^2,$ and the function $\varphi: \BB \to \C^2$ given by
\[\varphi(z_1, z_2) = \frac{1}{2}\Big(z_1+\frac{1}{2}, \frac{z_2}{2}\Big).\]
Obviously,   $\varphi \in  S(\BB)$ and $0 \in \varphi(\BB).$  For every $z = (z_1, z_2) \in \BB,$ consider $x\in \D_1$ such that $|x| = |\varphi(z)|.$ Then, it is easy to check that for $z' = (2x - \frac{1}{2}, z'_2) \in \BB$ we have $\varphi_1(z') = x,$ consequently, $|\varphi_1(z')| = |\varphi(z)|.$
 Thus, $\varphi \in \widetilde S_1(\BB). $

However, it is easy to check that the point $-\frac{i}{2} \in \D_1 \setminus \text{\rm Pr}_{\D_1}\varphi(\BB). $ This means   $\varphi \notin S^*_1(\BB).$
\end{example}
\begin{example} In $\C^3,$ consider $\varphi =(\varphi_1, \varphi_2, \varphi_3) \in S(\BB)$ given by

\[\varphi(z_1, z_2, z_3)= \frac{1}{5}\Big(3z_1, \frac{3}{2}z^2_2+i, z_3\Big).\]

Obviously,    $\varphi \in  S(\BB)$ and $0 \in \varphi(\BB).$   For every $z = (z_1, z_2,z_3) \in \BB,$ consider $y\in \D_2$ such that $|y| = |\varphi(z)|.$ Then, it is easy to check that for $z' = \big(z'_1, \sqrt{\frac{2(5y-i)}{3}},z'_3\big) \in \BB$ we have $\varphi_2(z') = y,$ consequently, $|\varphi_2(z')| = |\varphi(z)|.$
 Thus, $\varphi \in \widetilde S_2(\BB). $

However, it is easy to check that the point $-\frac{i}{2} \in \D_2 \setminus {\rm Pr}_{\D_2}\varphi(\BB). $ This means   $\varphi \notin S^*_2(\BB).$
\end{example}

\begin{example} Fix $\alpha_1, \ldots, \alpha_N \in \R$ and $a=(a_1, \ldots, a_N) \in \BB \subset \C^N$ such that $|a_p| = \max_{1\le k\le N}|a_k|,$ $p \in \{1, \ldots, N\}.$ Consider $\varphi =(\varphi_1, \ldots. \varphi_N) \in S(\BB)$ given by
\[\varphi(z) = (e^{i\alpha_1}a_1z_1, \ldots, e^{i\alpha_N}a_Nz_N), \quad z= (z_1, \ldots, z_N).\]
It is easy to see that $\varphi(\BB)\subset \BB$ and $\varphi(0)=0.$  We have 
\[|\varphi(z)|\le \max_{1\le k \le N}|a_k||z| = |a_p||z| < |a_p| = |e^{i\alpha_p}a_p|,\quad z\in \BB.\]
On the other hand, $\varphi_p(\BB) =\{x \in \C:\ |x| \le |a_p|\}, $
This means $\sup_{x \in \partial(\text{\rm Pr}_{\D_p}\varphi(\BB))}|x| \ge \sup_{z\in \partial\varphi(\BB)}|z|.$ Therefore, $\varphi \in \widetilde S_p(\BB).$

While, it is clear that $\varphi \notin S^*_p(\BB)$ because $\{x \in \C: \ |x|> |a_p|\} = \D_N \setminus \text{\rm Pr}_{\D_p}\varphi(\BB) \neq \varnothing.$   \end{example}

\section{Auxiliary Results}\label{sec_3}
By $Aut(\BB)$ we denote  the automorphism group of $\BB$ that  consists of all bi-holomorphic mappings of $\BB.$ It is known that every  $\gamma \in Aut(\BB)$ is a unitary transformation of $\C^N$ if and only if $\gamma(0) = 0$ (see \cite[Lemma 1.1]{Zh}).	
		For any $ \alpha \in \BB \setminus \{0\}, $ we define 
\begin{equation}\label{eq_Mobius}
	\gamma_\alpha(z) = \frac{\alpha - P_\alpha(z) - s_\alpha Q_\alpha(z)}{1 - \langle z, \alpha\rangle}, \quad z \in \BB,
\end{equation} 
where $s_\alpha = \sqrt{1 - |\alpha|^2},$   $P_\alpha(z) = \frac{\langle z, \alpha\rangle}{|\alpha|^2}\alpha,$ 

When $\alpha = 0,$ we simply define $\gamma_\alpha(z)=-z.$ 
It is obvious that each $\gamma_\alpha$  is a holomorphic mapping from $\BB$ into $\C^N.$ It is well known that each $ \gamma_\alpha $ is a homeomorphism of the closed unit ball $ \overline{\BB} $ onto $ \overline{\BB} $ and every automorphism $ \gamma $ of $ \BB $ is the form $ \gamma = \gamma_\alpha U, $ where $ U $ is a unitary transformation of $ \C^N. $	

It is known that
\begin{equation}\label{eq_modul_phi}
1 - |\gamma_\alpha(z)|^2 = \frac{(1-|\alpha|^2)(1 -|z|^2)}{|1-\langle z, \alpha\rangle|^2}.
\end{equation}

\begin{lemma}
\label{lem_B_n} Let $ \nu $ be  a  normal  weight   on $ \mathbb B, $  and $  h $ be a  positive,  real-valued bounded  function   defined on $ \mathbb B$ satisfying $\lim_{|z|\to 1}h(z) > 0.$  
\begin{enumerate}[\rm(a)]
		\item[\rm(a)] Assume that    $\varphi = (\varphi_1, \ldots, \varphi_N) \in S^*_p(\BB)$ for some $p \in \{1, \ldots, N\},$ $\varphi(0) = 0, $ and
		\[\M^j_p := \sup_{w\in \BB}h(w)\Big\|\delta^{\mathcal H^{(j)}_\nu}_{\varphi_p(w)}\Big\| <\infty,\quad j=1, 2,\ldots,  \]
then there exists constant $C_j>0$ such that 
	\begin{equation}\label{lem_S*1}  \sup_{z \in \BB}h(z)\Big\|\delta^{\mathcal H^{(j)}_\nu}_{\varphi(z)}\Big\| \le C_j\M^j_p. 
	\end{equation}
	\item[\rm(b)] Let $\alpha \in \mathbb B \setminus \{0\},$   $\gamma= \gamma_\alpha = (\gamma_1, \ldots, \gamma_N)$ be defined as in  \eqref{eq_Mobius}. For every $p \in \{1, \ldots, N\}$ satisfying 
	\[\M^{j*}_p := \sup_{w\in \mathbb B}h(w)\Big\|\delta^{\mathcal H^{(j)}_\nu}_{w_p}\Big\| <\infty,\quad j =1, 2,\ldots,\] 
	there exists $C_{p,j} >0$ such that
		\begin{equation}\label{lem_gamma*1} \sup_{z\in \BB}h(z)\Big\|\delta^{\mathcal H^{(j)}_\nu}_{\gamma_p(z)}\Big\| \le C_{p,j}\M^{j*}_p.
		\end{equation}
		\item[\rm(c)] For every $p \in \{1, \ldots, N\},$  we have  		\[\sup_{z\in \mathbb B}\frac{\nu(z)}{\nu(\gamma(z))} < \infty,\quad \sup_{z\in \mathbb B}\frac{\nu(z_p)}{\nu(\gamma_p(z))} < \infty.\]
	\end{enumerate}
 \end{lemma}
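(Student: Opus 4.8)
The plan is to exploit one structural observation that turns all three parts into essentially the same computation: since $\nu$ is radial and, by Proposition~\ref{exam_E_delta}, $\big\|\delta^{\mathcal H^{(j)}_\nu}_z\big\| \asymp 1 + I^j_\nu(z)$ with $I^j_\nu(z)$ depending on $z$ only through $|z|$, both the weight $\nu(z)=\nu(|z|)$ and the point-evaluation norm are functions of the modulus alone. Thus $\big\|\delta^{\mathcal H^{(j)}_\nu}_z\big\|$ is completely determined by $|z|$. I also record at the outset that the hypotheses on $h$ (positive, bounded, with $\lim_{|z|\to1}h(z)>0$) yield constants $0<c\le M<\infty$ with $c\le h\le M$ on $\BB$: the upper bound is given, and the lower bound follows by combining positivity on each compact ball $\{|z|\le\rho\}$ with the boundary behaviour. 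Every part then reduces to the slogan ``match moduli, then compare $h$''.

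For part~(a), fix $z\in\BB$ and put $r=|\varphi(z)|<1$. Because $\varphi\in S^*_p(\BB)$ we have $re_p\in\D_p\subseteq\varphi(\BB)$, so there is $w\in\BB$ with $\varphi(w)=re_p$, whence $|\varphi_p(w)|=r=|\varphi(z)|$ and therefore $\big\|\delta^{\mathcal H^{(j)}_\nu}_{\varphi(z)}\big\|=\big\|\delta^{\mathcal H^{(j)}_\nu}_{\varphi_p(w)}\big\|$. Multiplying by $h(z)$ and inserting the factor $h(w)/h(w)$ gives $h(z)\big\|\delta^{\mathcal H^{(j)}_\nu}_{\varphi(z)}\big\|\le (M/c)\,h(w)\big\|\delta^{\mathcal H^{(j)}_\nu}_{\varphi_p(w)}\big\|\le (M/c)\,\M^j_p$, and taking the supremum over $z$ yields \eqref{lem_S*1} with $C_j=M/c$. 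Part~(b) is the same move, now even more directly: for each $z$ set $v=\gamma_p(z)\in\D$ and $w=v e_p\in\BB$, so that $w_p=v=\gamma_p(z)$ as points (not merely in modulus); then $h(z)\big\|\delta^{\mathcal H^{(j)}_\nu}_{\gamma_p(z)}\big\|=(h(z)/h(w))\,h(w)\big\|\delta^{\mathcal H^{(j)}_\nu}_{w_p}\big\|\le(M/c)\,\M^{j*}_p$, which is \eqref{lem_gamma*1}. Notably this argument uses nothing about $\gamma$ beyond $\gamma(\BB)\subseteq\BB$; the Möbius structure is only needed for part~(c).

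For the first estimate in part~(c) I would start from \eqref{eq_modul_phi}. Since $|\langle z,\alpha\rangle|\le|\alpha|$, the denominator satisfies $(1-|\alpha|)^2\le|1-\langle z,\alpha\rangle|^2\le(1+|\alpha|)^2$, so \eqref{eq_modul_phi} gives $1-|\gamma(z)|^2\asymp 1-|z|^2$ with constants depending only on $|\alpha|$, and hence $1-|\gamma(z)|\asymp 1-|z|$. It then remains to pass from comparable boundary distances to comparable weight values, and this is where normality enters: for $|z|,|\gamma(z)|\ge\delta$ one uses that $\nu(t)/(1-t)^a$ is decreasing and $\nu(t)/(1-t)^b$ is increasing (conditions \eqref{w_1}--\eqref{w_2}) to bound $\nu(|z|)/\nu(|\gamma(z)|)$ by a fixed power of the comparability constant; the complementary region $|z|\le\rho_0$ is compact and $\nu$ is continuous and strictly positive there, so the ratio stays bounded. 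Taking the supremum gives $\sup_z \nu(z)/\nu(\gamma(z))<\infty$.

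The second estimate in part~(c) is the crux, and I expect it to be the main obstacle. The clean identity \eqref{eq_modul_phi} controls the \emph{full} modulus $|\gamma(z)|$, but here I must compare $1-|z_p|$ with $1-|\gamma_p(z)|$ for a single coordinate, and $\gamma_p$ in general mixes all coordinates of $z$ through $P_\alpha$ and the denominator $1-\langle z,\alpha\rangle$. The plan is therefore to extract from \eqref{eq_Mobius} a component analogue of \eqref{eq_modul_phi}, namely a two-sided comparison $1-|\gamma_p(z)|\asymp 1-|z_p|$ with constants depending on $\alpha$ and $p$, after which the identical normality argument from the first estimate closes the proof. Establishing this coordinatewise comparison is delicate precisely because the distinguished $p$-th coordinate must be shown to dominate the mixing contributions as $z$ approaches the sphere; I would attack it by writing $\gamma_p(z)=\big(\alpha_p-s_\alpha z_p-(1-s_\alpha)(\langle z,\alpha\rangle/|\alpha|^2)\alpha_p\big)/(1-\langle z,\alpha\rangle)$ and tracking its modulus against $1-|z_p|$. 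I anticipate that, unlike the full-modulus case, controlling the ratio uniformly will require exploiting the position of $\alpha$ relative to $e_p$ (as in the examples of Section~\ref{sec_2}) rather than an arbitrary $\alpha\in\BB\setminus\{0\}$, and this is the step I would expect to consume the bulk of the work.
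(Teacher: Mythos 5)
Your ``match moduli, then compare $h$'' reduction is the right core idea for (a) and (b) --- for (a) it is exactly the paper's argument --- but your justification of the $h$-comparison has a genuine gap. The lemma does not assume $h$ continuous, so ``positivity on each compact ball'' gives no positive infimum there: the function $h(z)=|z|$ for $z\neq 0$, $h(0)=1$, is positive, bounded, has boundary limit $1$, yet $\inf_{|z|\le 1/2}h=0$. The hypothesis $\lim_{|z|\to 1}h(z)>0$ only yields a lower bound $\inf_{|z|\ge \delta_0}h>0$ on an outer annulus, so your constant $c$ does not exist and the factor $h(z)/h(w)$ is not automatically controlled. In (a) this matters: you must force the matching point $w$ (with $\varphi(w)=|\varphi(z)|e_p$) into that annulus, and the only tool for this is the Schwarz lemma, $|w|\ge |\varphi(w)|=|\varphi(z)|$, which is precisely why the hypothesis $\varphi(0)=0$ appears in (a) --- a hypothesis your argument never uses, which is the telltale sign of the gap. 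The paper's proof is your matching argument plus the case split: for $|\varphi(z)|\le \delta_0$ the norm $\big\|\delta^{\mathcal H^{(j)}_\nu}_{\varphi(z)}\big\|\asymp 1+I^j_\nu(\varphi(z))$ is bounded and the estimate is absorbed into $C_j\M^j_p$; for $|\varphi(z)|>\delta_0$ Schwarz places both $z$ and $w$ in the annulus. In (b) the same case split repairs your argument with no further input, since your choice $w=\gamma_p(z)e_p$ satisfies $|w|=|\gamma_p(z)|$ automatically; once repaired, your proof of (b) is genuinely simpler and more general than the paper's (which goes through surjectivity of $\gamma$, the $P_\alpha,Q_\alpha$ decomposition and an explicit estimate of $|\gamma_p(z')|^2$), and, as you note, uses nothing about $\gamma$ beyond $\gamma_p(\BB)\subset \D$. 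Your treatment of the first inequality in (c) ($1-|\gamma(z)|\asymp 1-|z|$ from \eqref{eq_modul_phi}, then \eqref{w_1}--\eqref{w_2} on the outer region, compactness on the inner one) is correct and cleaner than the paper's limit computation.

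The second inequality in (c) is where your attempt stops --- you offer only a plan --- and your instinct that this step is the obstacle is right in the strongest sense: the coordinatewise comparison $1-|\gamma_p(z)|\asymp 1-|z_p|$ you propose to establish is false for general $\alpha$, and indeed the claimed inequality $\sup_{z}\nu(z_p)/\nu(\gamma_p(z))<\infty$ itself fails. Take $N=2$, $p=2$, $\alpha=\big(\tfrac23,\tfrac23\big)$, so $|\alpha|^2=\tfrac89$ and $s_\alpha=\tfrac13$. A direct computation with \eqref{eq_Mobius} gives $\gamma_\alpha(e_1)=e_2$ and, along the ray $z=re_1$,
\begin{equation*}
\gamma_2(re_1)=\frac{2-r}{3-2r}\ \longrightarrow\ 1 \quad (r\to 1^-),
\end{equation*}
while $z_2\equiv 0$ on this ray. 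Hence $\nu(z_2)/\nu(\gamma_2(z))=\nu(0)/\nu\big(\tfrac{2-r}{3-2r}\big)\to\infty$, because normal weights vanish at the boundary by \eqref{w_1}. So no argument can close this step for arbitrary $\alpha\in\BB\setminus\{0\}$; your remark that one must ``exploit the position of $\alpha$ relative to $e_p$'' is exactly the point --- some restriction tying $\alpha$ to the $p$-th coordinate is unavoidable. Note also that the paper's own justification of this inequality (``since $|\gamma_k(z)|\le |\gamma(z)|$, by an estimate as \eqref{eq_prepare4}'') suffers from the same defect: $|\gamma_p(z)|\to 1$ forces $|z|\to 1$, but gives no lower bound on $|z_p|$, which is what the estimate would require.
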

 \begin{proof} Let $j \in \N$ be fixed. 
Obviously, \eqref{lem_S*1} holds when $|\varphi(z)|=0.$ It suffices to consider the case $|\varphi(z)|>0.$

Since $\lim_{|z|\to1}h(z) > 0$ we can find   $\delta_0 \ge \delta$  such that $\inf_{|z| \ge \delta_0}h(z)>0.$ Then, by the boundedness of $h$ we have $C^+_{\delta_0} := \frac{\sup_{|z|\ge \delta_0}h(z)}{\inf_{|z| \ge \delta_0}h(z)} <\infty.$  
Denote $C^-_{\delta_0} := \frac{\delta_0}{m_{\nu,\delta_0}}\sup_{w\in \BB}h(w)$ where $m_{\nu,\delta_0} = \inf_{|z|\le \delta_0}\nu(z) >0.$

(a) Fix $z \in \BB.$ First, note that,  since   $\M^j_p<\infty,$  and  $h$ is bounded on $\BB,$ there exists $C^-_j >0$ such that $C^-_{\delta_0}  \le C^-_j\M^j_p.$

In the case $0<|\varphi(z)| \le \delta_0$ we have
\[\aligned
h(z)I^j_\nu(\varphi(z)) &\le h(z)I^j_\nu(\delta_0) 
\le \sup_{w\in \BB}h(w)\int_0^{\delta_0}\cdots\int_0^{\delta_0} \frac{dt_1\ldots dt_j}{m_{\nu,\delta_0}} \\
&\le \frac{\delta^j_0}{m_{\nu,\delta_0}}\sup_{w\in \BB}h(w) \le \frac{\delta_0}{m_{\nu,\delta_0}}\sup_{w\in \BB}h(w) 
= C^-_{\delta_0} \le C^-_j\M^j_p.
\endaligned\]

Now we consider  the case $|\varphi(z)| \ge \delta_0.$ By $\varphi \in S^*_p(\BB), $ there exists $z' \in \BB$ such that $|\varphi_p(z')| = |\varphi(z)|.$ Since $\varphi(0)=0,$ we have $|z'| \ge |\varphi(z')| \ge |\varphi_p(z')| = |\varphi(z)| > \delta_0.$ Therefore, $h(z') \ge \inf_{|z|\ge \
\delta_0}h(z).$ Then, since $\nu$ is decreasing on $[\delta, 1),$ we get the following estimates 
\[\aligned
&h(z)I^j_\nu(\varphi(z))  = h(z) \int_0^1\ldots\int_0^1\frac{|\varphi(z)|^jdt_1\cdots dt_j}{\nu(t_1\cdots t_j\varphi(z))}\\
&=h(z) \int_0^1\ldots\int_0^1\int_0^{\delta_0/t|\varphi(z)|}\frac{|\varphi(z)|^jdt_1\cdots dt_j}{\nu(t_1\cdots t_j\varphi(z))} + h(z)\int_0^1\ldots\int_0^1\int_{\delta_0/t|\varphi(z)|}^1\frac{|\varphi(z)|^jdt_1\cdots dt_j}{\nu(t_1\cdots t_j\varphi(z))}\\
&\le \sup_{w\in \BB}h(w)\int_0^1\ldots\int_0^1\int_0^{\delta_0} \frac{dt_1\cdots dt_j}{m_{\nu, \delta_0}}+\frac{h(z)}{h(z')}h(z') \int_0^1\ldots\int_0^1\int_{\delta_0/t|\varphi_k(z')|}^1 \frac{|\varphi_p(z')|^jdt_1\cdots dt_j}{\nu(t_1\cdots t_j\varphi_p(z'))}\\
&\le C^-_j\M^j_p + C^+_{\delta_0} \sup_{w\in \BB}h(w)I^j_\nu(\varphi_p(w))  = (C^-_j +C^+_{\delta_0})\M^j_p <\infty.\endaligned\]

Combining this with the boundedness of the function $h$ and Proposition \ref{exam_E_delta}, we conclude that \eqref{lem_S*1} holds.
 
(b) First, recall from \cite{Zh} that $P_\alpha $ is the orthogonal projection from $\mathbb C^N$ onto the one dimensional subspace $[\alpha ]$ generated by $\alpha $ and  $Q_\alpha $ is the orthogonal projection from $\mathbb C^N$
	onto $\mathbb C^N \ominus [\alpha ].$ Note that  $\gamma(\alpha) =0$ and $(\gamma \circ \gamma)(z) = z$ for every $z \in \mathbb B.$ 

Let $p \in \{1, \ldots, N\}$ be fixed, with $\M^*_p< \infty.$  Since $\gamma$ is surjective, for every $z\in \BB,$ there exists $z' \in \BB$ such that
	\[\gamma(z') = (0, \ldots, \gamma_p(z), 0, \ldots, 0).\]
It is clear  that $\gamma_j(z')=0$ for $j \in \{1, \ldots, N\}\setminus \{p\}$ and
\[\gamma_p(z) =\gamma_p(z') = \frac{\alpha _p - P_{\alpha, p}(z') - s_\alpha Q_{\alpha, p}(z')}{1 - \langle z',\alpha \rangle},\]
	where $P_{\alpha, p}(z') := \frac{\langle z',\alpha \rangle}{|\alpha|^2}\alpha_p,$ and $Q_{\alpha, p}(z') = z'_p - \frac{\langle z',\alpha \rangle}{|\alpha|^2}\alpha_p.$ 	
	
		Since $\alpha  - P_\alpha (z)$ and $Q_\alpha (z)$ are perpendicular in $\mathbb C^N,$  	we have
	\[\aligned
	&|\alpha_p - P_{\alpha, p}(z') - s_\alpha Q_{\alpha, p}(z')|^2  =|\alpha_p - P_{\alpha, p}(z')|^2 + (1-|\alpha|^2)\big(|z'_p|^2 - |P_{\alpha, p}(z')|^2\big)\\
	&\ = |\alpha_p|^2\bigg(1 - 2\text{Re}\frac{\langle z',\alpha\rangle}{|\alpha|^2} +\frac{|\langle z',\alpha\rangle|^2}{|\alpha|^4}\bigg) - \frac{|\langle z',\alpha\rangle|^2}{|\alpha|^4}|\alpha_p|^2 + \frac{|\langle z',\alpha\rangle|^2}{|\alpha|^2}|\alpha_p|^2 +(1 - |\alpha|^2)|z_p|^2 \\
	&\ = \frac{|\alpha_p|^2}{|\alpha|^2}\bigg(|\alpha|^2-1 +1 - 2\text{Re}\langle z',\alpha\rangle + |\langle z',\alpha\rangle|^2\bigg) + (1 - |\alpha|^2)|z'_p|^2 \\
	&\ = \frac{|\alpha_p|^2}{|\alpha|^2}(|\alpha|^2-1) + \frac{|\alpha_p|^2}{|\alpha|^2}|1 - \langle z',\alpha\rangle|^2 + (1 - |\alpha|^2)|z'_p|^2. \\
	\endaligned\]
	This yields that 
		\begin{equation}\label{est_gamma_k}
	\aligned
	|\gamma_p(z)|^2 &= |\gamma(z')|^2 = |\gamma_p(z')|^2 =  \frac{|\alpha_p|^2}{|\alpha|^2} - \frac{(1 - |\alpha|^2)\bigg(\frac{|\alpha_p|^2}{|\alpha|^2} -|z'_p|^2\bigg)}{|1 - \langle z',\alpha\rangle|^2} \\
	& \le \frac{|\alpha_p|^2}{|\alpha|^2}  - \frac{(1 - |\alpha|)\bigg(\frac{|\alpha_p|^2}{|\alpha|^2} -|z'_p|^2\bigg)}{1 +|\alpha|} \\
	&= \frac{2|\alpha_p|^2}{|\alpha|(1+|\alpha|)} +\frac{1-|\alpha|}{1 +|\alpha|}|z'_p|^2 = A_p^2 + A^2|z'_p|^2  < A_p^2+A^2,
	\endaligned\end{equation}
	where $A^2_p := 	 \frac{2|\alpha_p|^2}{|\alpha|(1+|\alpha|)} $ 
	and $A^2 := 
	\frac{1-|\alpha|}{1 +|\alpha|}.$ 
	It is easy to check that $A_p^2 + A^2< 1.$  
	Then,  for all $ |z'_p| \ge |\alpha|,$ we obtain the following the estimate
	 		\[\aligned
	I^j_\nu(\gamma_p(z))&\le  \int_0^{A|z'_p|}\int_0^{t_{j-1}}\cdots\int_0^{t_1}\frac{dtdt_1\ldots dt_{j-1}}{\nu(t)} \\
	&\quad  +\int_{A|z'_p|}^{\sqrt{A_p^2+A^2|z'_p|^2}}\int_0^{t_{j-1}}\cdots\int_0^{t_1}\frac{dtdt_1\ldots dt_{j-1}}{\nu(t)} \\
	&\le \int_0^{|z'_p|}\int_0^{t_{j-1}}\cdots\int_0^{t_1}\frac{dtdt_1\ldots dt_{j-1}}{\nu(t)} \\
	&\quad + \int_{A|\alpha|}^{\sqrt{A_p^2+A^2}}\int_0^{t_{j-1}}\cdots\int_0^{t_1}\frac{dtdt_1\ldots dt_{j-1}}{\nu(t)} \\
	&= C_{p,j} + I^j_\nu(z'_p),\endaligned \]
	where $C'_{p,j}  = \int_{A|\alpha|}^{\sqrt{A_p^2+A^2}}\int_0^{t_{j-1}}\cdots\int_0^{t_1}\frac{dtdt_1\ldots dt_{j-1}}{\nu(t)} < \infty.$
		Obviously, in the case $|z'_p|\le |\alpha|,$ we have 
	\[I^j_\nu(\gamma_p(z))\le  \int_0^{A|\alpha|}\int_0^{t_{j-1}}\cdots\int_0^{t_1}\frac{dtdt_1\ldots dt_{j-1}}{\nu(t)} <\infty.\]
	
	At that point, using reasoning similar to the final estimates in the proof of assertion (a), we obtain assertion \eqref{lem_gamma*1}.
	 
	 (c) 
	 For every  $r \in (|\alpha|, 1),$ the continuity of $\gamma$ ensures that the set   $\{\gamma(z): |z|\le r\}$ is   compact in $\mathbb B.$ Since $\nu$ is positive and continuous, $\inf_{|z|\le r}\nu(\gamma(z)) > 0,$ it implies that
	\[\sup_{|z|\le r}\frac{\nu(z)}{\nu(\gamma(z))} < \infty.\]
		On the other hand, for every $z \in \mathbb B,$ $|z|> r,$ by \eqref{est_gamma_k}, we have
	\[\aligned
	\frac{(1-|z|)^a}{(1-|\gamma_\alpha(z)|)^b} &\le \frac{(1-r)^a}{\Big(1 - \frac{2r^2}{|\alpha|(1+|\alpha|)} - \frac{1-r}{1 +r}|z|^2\Big)^b} \to \frac{(1-r)^a}{\Big(1 - \frac{2r^2}{|\alpha|(1+|\alpha|)} - \frac{1-r}{1 +r}\Big)^b} < \infty
	\endaligned\]
	as $|z|\to 1$ because it is easy to checck that $1 - \frac{2r^2}{|\alpha|(1+|\alpha|)} - \frac{1-r}{1 +r} > 0.$ Therefore,	\[\sup_{|z|> r}\frac{(1-|z|)^a}{(1-|\gamma_\alpha(z)|)^b} < \infty\]
		  for $r \in (|\alpha|,1)$ sufficiently large. Then,	by \eqref{w_1} and \eqref{w_2},	 
		 \begin{equation}\label{eq_prepare4}\lim_{|z|\to 1}\frac{\nu(z)}{\nu(\gamma(z))} = \lim_{|z|\to 1}\frac{\nu(z)}{(1 - |z|)^a}\frac{(1 - |\gamma(z))|)^b}{\nu(\gamma(z))}\frac{(1 - |z|)^a}{(1-|\gamma(z)|)^b} = 0. \end{equation} 
	Then, we obtain  the first inequality. 
	
	Now, it is obvious that $\sup_{|\gamma_k(z)|\le r}\frac{\nu(z_k)}{\nu(\gamma_k(z)} <\infty$ with $r\in (\delta, 1).$ In the case $|\gamma_k(z)|> r,$ since $|\gamma_k(z)|\le |\gamma(z)|,$ by an estimate as \eqref{eq_prepare4}, we obtain the second inequality in (b) of the lemma.
\end{proof}

\begin{remark}\label{rmk_h} Since $\M^j_p < \infty$ we can find $C_j^*>0$ such that 
\[
\sup_{w\in \BB}h(w)\int_0^{\delta_0}\int_0^{t_{j-1}}\cdots\int_0^{t_1}\frac{dtdt_1\ldots dt_{j-1}}{m_{\nu, \delta_0}} \le C^*_j\sup_{|\varphi_p(w)|>\delta_0}h(w)I^j_\nu(\varphi_p(w)).\]
Thus, the estimate \eqref{lem_S*1} can be written as follows: 
\[
\sup_{|\varphi(z)|>\delta_0}h(z)\Big\|\delta^{\mathcal H^{(j)}_\nu}_{\varphi(z)}\Big\| \lesssim \sup_{|\varphi_p(w)|>\delta_0}h(w)\Big\|\delta^{\mathcal H^{(j)}_\nu}_{\varphi_p(w)}\Big\|,\]
\end{remark}

\begin{lemma}\label{lem_prepare_3} Let $ \nu $ be  a  normal  weight   on $ \BB, $  $\alpha \in \BB \setminus \{0\}$ and $\gamma \in Aut(\BB)$   defined by
	\eqref{eq_Mobius}. 
	Then, the composition operator $C_\gamma: \mathcal{H}^{(n)}_{\omega} \to \mathcal{H}^{(n)}_{\omega},$ $f \mapsto f \circ \gamma,$ is an homeomorphism. 
\end{lemma}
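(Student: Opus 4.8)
The plan is to reduce the homeomorphism assertion to a single boundedness estimate, exploiting that $\gamma$ is an involution. Recall from the proof of Lemma~\ref{lem_B_n}(b) that $\gamma\circ\gamma=\mathrm{id}_{\BB}$; since composition operators satisfy $C_\sigma C_\tau=C_{\tau\circ\sigma}$, this yields $C_\gamma C_\gamma=C_{\gamma\circ\gamma}=C_{\mathrm{id}}=\mathrm{Id}$. Hence $C_\gamma$ is a linear bijection of $\mathcal H^{(n)}_\omega$ onto itself with $C_\gamma^{-1}=C_\gamma$. Consequently, once I show that $C_\gamma$ is bounded, its inverse—being the same operator—is automatically bounded, and $C_\gamma$ is a topological isomorphism. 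So the entire problem collapses to proving $\|f\circ\gamma\|_{\mathcal H^{(n)}_\omega}\lesssim\|f\|_{\mathcal H^{(n)}_\omega}$.

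For boundedness I would split the norm into its two parts. The value at the origin is harmless: since $\gamma(0)=\alpha$ is a fixed interior point, $|(f\circ\gamma)(0)|=|f(\alpha)|\le\|\delta^{\mathcal H^{(n)}_\omega}_\alpha\|\,\|f\|_{\mathcal H^{(n)}_\omega}$, and $\|\delta^{\mathcal H^{(n)}_\omega}_\alpha\|\asymp 1+I^n_\omega(\alpha)<\infty$ by Proposition~\ref{exam_E_delta}. For the seminorm I would expand $R^{(n)}(f\circ\gamma)$ by the Fa\`a di Bruno formula \eqref{eq_Bruno} and estimate term by term. Two inputs feed this. First, because $1-\langle z,\alpha\rangle$ never vanishes on $\overline{\BB}$ (as $|\langle z,\alpha\rangle|\le|\alpha|<1$), each component $\gamma_l$ is holomorphic on a neighborhood of $\overline{\BB}$, so every $R^{(k)}\gamma_l$ is continuous there and $\sup_{z\in\BB}|R^{(k)}\gamma_l(z)|<\infty$; hence the inner sums $\sum_{\vec k\in K_{n,j}}C_{\vec{k}}^{n}\prod_{t}|R^{(k_t)}\gamma_{l_t}(z)|$ are bounded by a constant independent of $z$. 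Second, by \eqref{eq_est_parial_f}, $\big|\frac{\partial^j f(\gamma(z))}{\partial z_{\vec{l}}}\big|\lesssim\|\delta^{\mathcal H^{(n-j)}_\omega}_{\gamma(z)}\|\,\|f\|_{\mathcal H^{(n)}_\omega}$. Combining these, it remains to bound $\sup_{z\in\BB}\omega(z)\,\|\delta^{\mathcal H^{(n-j)}_\omega}_{\gamma(z)}\|$ for $j=1,\dots,n$.

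This last quantity is the crux, and the lower-order terms $j<n$ are the delicate ones. I would write $\omega(z)\|\delta^{\mathcal H^{(k)}_\omega}_{\gamma(z)}\|=\frac{\omega(z)}{\omega(\gamma(z))}\cdot\omega(\gamma(z))\|\delta^{\mathcal H^{(k)}_\omega}_{\gamma(z)}\|$ with $k=n-j$: the first factor is uniformly bounded by Lemma~\ref{lem_B_n}(c) (applied to the weight $\omega$), and for $j=n$, i.e.\ $k=0$, since $\|\delta^{\mathcal H^{(0)}_\omega}_{\gamma(z)}\|=1/\omega(\gamma(z))$ this already closes the term. For $k\ge1$ everything reduces to the weight-diagonal bound $\sup_{w\in\BB}\omega(w)\,\|\delta^{\mathcal H^{(k)}_\omega}_w\|<\infty$, i.e.\ $\sup_w\omega(w)(1+I^k_\omega(w))<\infty$. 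The naive route of bounding $\omega$ and $1/\omega$ separately by the powers $(1-t)^a$ and $(1-t)^b$ from \eqref{w_1}--\eqref{w_2} is too lossy and produces a spurious blow-up; instead I would use \eqref{w_1} as a \emph{direct comparison}: for $\delta\le t\le|w|$ one has $\omega(w)/\omega(t)\le(1-|w|)^a/(1-t)^a$, whence $\omega(w)\int_\delta^{|w|}\omega(t)^{-1}\,dt\le(1-|w|)^a\int_\delta^{|w|}(1-t)^{-a}\,dt$, which stays bounded whether $a<1$ or $a\ge1$ (the prefactor $(1-|w|)^a$ absorbing the growth of the integral). Since the iterated integral satisfies $I^k_\omega(w)=\frac{1}{(k-1)!}\int_0^{|w|}(|w|-t)^{k-1}\omega(t)^{-1}\,dt\le\frac{1}{(k-1)!}I^1_\omega(w)$, the estimate propagates to every $k$. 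Feeding this back gives $\sup_z\omega(z)|R^{(n)}(f\circ\gamma)(z)|\lesssim\|f\|_{\mathcal H^{(n)}_\omega}$, hence boundedness, and with the involution the homeomorphism follows. The main obstacle is precisely this uniform control of the mixed expression $\omega(z)\|\delta^{\mathcal H^{(k)}_\omega}_{\gamma(z)}\|$: it forces the sharp use of \eqref{w_1} together with Lemma~\ref{lem_B_n}(c), rather than crude two-sided power estimates.
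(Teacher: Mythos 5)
Your proof is correct, and its skeleton coincides with the paper's: both expand $R^{(n)}(f\circ\gamma)$ by \eqref{eq_Bruno}, use that each $R^{(k)}\gamma_l$ is bounded on $\overline{\BB}$, extract the factor $\omega(z)/\omega(\gamma(z))$ and control it by Lemma~\ref{lem_B_n}(c), and get the inverse for free from the automorphism structure. Where you genuinely diverge is the middle estimate. The paper bounds $\omega(\gamma(z))\big|\partial^j f(\gamma(z))/\partial z_{\vec l}\big|\lesssim \|f\|_{\mathcal H^{(j)}_\omega}$ (the order-$j$ instance of \eqref{eq_est_parial_f}) and then climbs back up to $\|f\|_{\mathcal H^{(n)}_\omega}$ via the norm comparison \eqref{compair_norm}, i.e.\ by citing the inclusion machinery of \cite{Qu1}. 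You instead apply \eqref{eq_est_parial_f} at order $n$ directly, which forces you to prove the weight-diagonal bound $\sup_{w}\omega(w)\big(1+I^k_\omega(w)\big)<\infty$; your proof of it is sound: the comparison $\omega(|w|)/\omega(t)\le (1-|w|)^a/(1-t)^a$ from \eqref{w_1} on $[\delta,|w|]$ works in all three cases $a<1$, $a=1$, $a>1$, and the reduction $I^k_\omega\le \tfrac{1}{(k-1)!}I^1_\omega$ via Cauchy's repeated-integration formula is valid (the omitted piece $\int_0^\delta$ is trivial since $\omega$ is bounded above on $\BB$ and bounded below on $[0,\delta]$). This route buys a self-contained argument plus an explicit quantitative fact the paper never states -- $\big\|\delta_w^{\mathcal H^{(k)}_\omega}\big\|\lesssim 1/\omega(w)$ for every $k\ge 1$, by Proposition~\ref{exam_E_delta} -- whereas the paper's route is shorter given the results it imports. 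You are also slightly more careful on two minor points: you account for the $|(f\circ\gamma)(0)|$ term of the norm, which the paper's displayed computation drops, and you make the inverse explicit through the involution $\gamma\circ\gamma=\mathrm{id}$ (recorded in the proof of Lemma~\ref{lem_B_n}(b)) rather than only asserting that $C_{\gamma^{-1}}=C_\gamma^{-1}$ is bounded.
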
 
\begin{proof} Note that $ \gamma_j \in H(\overline{\BB}), $ $ j=1, \ldots,N, $ it implies from \eqref{eq_Mobius} and Corollary 1.5 in \cite{Zh}
 that $ R^{(k)}\gamma_j  \in H(\overline{\BB})$  and $ R^{(k)}\gamma_j$ is bounded in $ \overline{\BB} $ for any
positive integer $ k, $ i.e.,
\begin{equation}\label{estMRphi} M_{\gamma}^{(k)} := \sup_{z\in\BB}|R^{(k)}\gamma(z)|<\infty, \quad k = 1, 2, \ldots \end{equation}

By   \eqref{eq_Bruno}, 
\eqref{compair_norm},  Lemma \ref{lem_B_n}(c), 
we obtain
\[\aligned
\|C_\gamma(f)\|_{\mathcal H^{(n)}_\omega} &= \sup_{z\in \BB}\omega(z)|R^{(n)}(f\circ\gamma)(z)| \\
&\le \sup_{z\in \BB}\frac{\omega(z)}{\omega(\gamma(z))}\sum_{j=1}^n \sum_{\vec{l} \in L_{j}} \omega(\gamma(z))\bigg|\frac{\partial^j f(\gamma(z))}{\partial z_{\vec{l}}}\bigg|
\sum_{\vec{k} \in K_{n,j}} C_{\vec{k}}^{n} \prod_{t=1}^j |R^{(k_t)} \gamma_{l_t}(z)\Big|\\
&\lesssim \sup_{z\in \BB}\frac{\omega(z)}{\omega(\gamma(z))}\bigg(\sum_{j=1}^n \sum_{\vec{k} \in K_{n,j}} C_{\vec{k}}^{n} \prod_{t=1}^j M^{(k_t)}_\gamma\bigg) \|f\|_{\mathcal H^{(j)}_\omega}
\\
&\lesssim \sup_{z\in \BB}\frac{\omega(z)}{\omega(\gamma(z))}\bigg(\sum_{j=1}^n \sum_{\vec{k} \in K_{n,j}} C_{\vec{k}}^{n} \prod_{t=1}^j M^{(k_t)}_\gamma\bigg) \|f\|_{\mathcal H^{(n)}_\omega}.
\endaligned\] 
This means $C_\gamma$ is bounded. Since $\gamma \in Aut(\BB)$ it is easily seen that $C_{\gamma^{-1}} = C^{-1}_\gamma$ is also bounded. Hence, the lemma is proved. 
\end{proof}

\section{The Condition on the Symbols}\label{sec_4}
In this section, let $ \psi \in H(\BB), $ $\varphi = (\varphi_1, \ldots, \varphi_N) \in S(\BB),$ and   $ \mu, \nu $ be  normal weights on $ \BB. $ 
We use there certain quantities, which will be used in the main results of this paper:
 \[\aligned
 \mathfrak B_{n,j}(\varphi(z)) &:=  \sum_{\vec{k} \in K_{n,j}} \sum_{\vec{l} \in L_{j}} C_{\vec{k}}^{n} \prod_{t=1}^j R^{(k_t)} \varphi_{l_t}(z), \\
 \mathfrak B_{n,j}(\varphi_p(z)) &:=  \sum_{\vec{k} \in K_{n,j}}  C_{\vec{k}}^{n}\prod_{t=1}^jR^{(k_t)} \varphi_{p}(z), \\
\mathscr B_{0}^n(\psi; \varphi_p)(z) &:= \mathscr B_{0}^n(\psi; \varphi)(z)  := R^{(n)}(\psi(z)), \\
\mathscr B_{j}^n(\psi; \varphi_*)(z) &:=   \sum_{i=j}^n\binom{n}{i}R^{(n-i)}(\psi(z)) \mathfrak B_{i,j}(\varphi_*(z))  \quad\text{for}\ j\ge 1.\\
\endaligned\]
Here, the notation $\varphi_*$ denotes either $\varphi$ or $\varphi_k,$ $k = 1, \ldots, N.$

By performing similar calculations as in this formula with $\vec{l} = (p, \ldots, p) \in L_{j_0}$ we obtain
    \begin{equation}\label{formula_psi_phi} 
   R^{(n)}\Big(\psi\cdot \varphi^{j_0}_p\Big)(z) 
= \sum_{i=0}^{j_0}\mathscr B^n_{j_0-i}(\psi; \varphi_p)(z)\varphi^i_p(z).\end{equation}
 The following estimate is written from the formula (4.3) in \cite{Qu1}:
     \begin{equation}\label{est_bounded_1}
 \mu(z)|R^{(n)}W_{\psi,\varphi}(f)| \lesssim \sum_{j=0}^n  \mu(z)\big|\mathscr B_{j}^n(\psi; \varphi)(z)\big|\Big\|\delta_{\varphi(z)}^{\mathcal{H}^{(n+m-j)}_\nu}\Big\| \|f\|_{\mathcal{H}^{(n+m)}_\nu}\end{equation}
    
    In the assumptions of the main theorems of this paper, we use   the following condition:      The pair of functions $(\psi, \varphi_p)$ is said to satisfy the $(n, \mu)$-condition if
    \[\aligned
    \psi \in \mathcal H^{(n)}_{\mu,+} &:= \Big\{f \in \mathcal{H}^{(n)}_{\nu}: \ \lim_{|z|\to 1}\nu(z)|R^{(n)}f(z)| > 0\Big\},\\
    \psi\cdot\varphi^j_p \in \mathcal H^{(n)}_{\mu,0}&:= \Big\{f \in \mathcal{H}^{(n)}_{\nu}:   \lim_{|z| \to1}\nu(z)|R^{(n)}f(z)| =0\Big\} ,\quad j = 1.\ldots, n.
   \endaligned\]
     Below, we will present some examples to demonstrate that  assumption ($n, \mu$) is valid.
     
      \begin{example} For $\alpha \in (0, 1),$  consider the weight $\mu(z) \in H(\BB),$   $\varphi \in S^*_p(\BB)$ with
    \[\mu(z) = (1-|z|^2)^\alpha,   \quad   
 \varphi_p(z)= \langle z, e_p\rangle.  
\]

    We construct the function $\psi \in H(\BB)$  as follows:
  Let $q>1$ be a large positive integer to be determined, define
\begin{equation}\label{exam_1_Psi}
\widetilde\psi(z)=\sum_{k=0}^{\infty} a_k \langle z,e_p\rangle^{n_k} = \sum_{k=0}^{\infty} a_kz_p^{n_k},
\end{equation}
where $a_k=q^{k(\alpha-1)+\frac{\alpha}{2}}, n_k=q^k$. Because $\widetilde\psi(z)$ is a lacunary power series with
$$
a_k n_k^{1-\alpha}=q^{k(\alpha-1)+\frac{\alpha}{2}} q^{k(1-\alpha)}=q^{\frac{\alpha}{2}},
$$
using Theorem 1 (1) in \cite{Ya} we have $\widetilde\psi \in \mathcal B^\alpha = \mathcal H^{(1)}_\mu,$ and since $\alpha \in (0,1),$ it is easy to check that $\widetilde\psi \in  H^\infty(\BB).$  

By modifying an argument in the proof of Theorem 6 in \cite{Gi}, we next will show 
\begin{equation}\label{gap_0}
 |\nabla\widetilde\psi(z)(z)| \gtrsim \frac{1}{(1-|z|)^\alpha}
\end{equation}
for all $z \in \BB$ sufficiently close to the boundary. 

We write
$$
\begin{aligned}
|R\widetilde\psi(z)(z)| & =\bigg|\sum_{i=0}^{\infty} q^{i(\alpha-1)+\frac{\alpha}{2}+i} z_p^{q^i}\bigg| \\
& \ge q^{k(\alpha-1)+\frac{\alpha}{2}+k}|z_p|^{q^k+1}-\sum_{i=0}^{k-1} q^{i(\alpha-1)+\frac{\alpha}{2}+i}|z_p|^{q^i}-\sum_{i=k+1}^{\infty} q^{i(\alpha-1)+\frac{\alpha}{2}+i}|z_p|^{q^i} \\
& \ge q^{k(\alpha-1)+\frac{\alpha}{2}+k}|z_p|^{q^k+1}-\sum_{i=0}^{k-1} q^{i(\alpha-1)+\frac{\alpha}{2}+i}|z|^{q^i}-\sum_{i=k+1}^{\infty} q^{i(\alpha-1)+\frac{\alpha}{2}+i}|z|^{q^i} \\
& :=Q_1-Q_2-Q_3.
\end{aligned}
$$

For $z$ satisfying
\begin{equation}\label{gap_1}
1-\frac{1}{q^k} \le |z_p|\le |z| \le 1-\frac{1}{q^{k+\frac12}}\end{equation}
we have
\begin{equation}\label{gap_2}
|z_p|^{q^k+1} \geq \Big(1-\frac{1}{q^k}\Big)^{q^k+1} \ge \frac13
\end{equation}
if $q$ is large enough. Then \eqref{gap_2} gives
\[\aligned
Q_1 &\ge \frac13 q^{\left(k+\frac{1}{2}\right) \alpha}, \\
Q_2 &\le \sum_{i=0}^{k-1} q^{i(\alpha-1)+\frac{\alpha}{2}+i}   =\sum_{i=0}^{k-1} q^{i \alpha+\frac{\alpha}{2}}   =q^{\frac{\alpha}{2}} \frac{q^{k \alpha}}{q^\alpha-1}   =\frac{q^{\left(k+\frac{1}{2}\right) \alpha}}{q^\alpha-1}.\endaligned
\]

Applying \eqref{gap_1} again, we have $|z_p|^{q^k} \leq\left(\frac{1}{2}\right)^{q^{-\frac{1}{2}}}$ and
\[\aligned
Q_3 & =\sum_{i=k+1}^{\infty} q^{i \alpha+\frac{\alpha}{2}}|z|^{q^i}  =q^{\frac{\alpha}{2}} \sum_{i=k+1}^{\infty} q^{i \alpha}|z|^{q^i} \\
& \leq q^{\frac{\alpha}{2}} q^{\alpha(k+1)}|z|^{q^{k+1}} \sum_{i=0}^{\infty}\left(q^\alpha|z|^{q^{k+2}-q^{k+1}}\right)^i \\
& =q^{\alpha(k+1)+\frac{\alpha}{2}} \frac{|z|^{k+1}}{1-q^\alpha|z|^{q^{k+2}-q^{k+1}}}   =q^{\alpha\left(k+\frac{1}{2}\right)} \frac{q^\alpha|z|^{k+1}}{1-q^\alpha|z|^{q^{k+2}-q^{k+1}}} \\
& \leq q^{\left(k+\frac{1}{2}\right) \alpha} \frac{q^\alpha\left(\frac{1}{2}\right)^{\frac{1}{2}}}{1-q^\alpha\left(\frac{1}{2}\right)^{\frac{3}{2}-q^{\frac{1}{2}}}}.
\endaligned\]
From \eqref{gap_1} we have $
q^{k+\frac{1}{2}} \geq \frac{1}{1-|z|}.$ 
Combining \eqref{gap_2} with the estimates for $Q_1$, $Q_2$, and $Q_3,$  we get
\[
 |R\widetilde\psi(z)|  \geq \frac14 q^{\left(k+\frac12\right) \alpha}   \geq \frac14 \frac{1}{(1-|z|)^\alpha}
\]
for $z$ satisfying \eqref{gap_1} and $q$ sufficiently large, hence, \eqref{gap_0} is proved. This implies that
\[\lim_{|z|\to 1}(1-|z|^2)^\alpha|R\widetilde\psi(z)(z)| > 0.\]

Now we put
\[\psi(z) := \int_0^{\langle z, e_p\rangle}\int_0^{w_p^{(n-2)}}\ldots\int_0^{w_p^{(1)}}\widetilde\psi(z)(t)dtdt_p^{(1)}\cdots dt_p^{(n-2)}, \quad z \in \BB.\]
It is easy to verify that $R^{(n)}\psi(z) =  R\widetilde\psi(z)(z)z_p^{n-1}.$ Consequently,
\[\lim_{|z|\to 1}(1-|z|^2)^\alpha|R^{(n)}\psi(z)| \gtrsim  \lim_{|z|\to 1}(1-|z|^2)^\alpha|R\widetilde\psi(z)(z)| > 0.\]
That means $\psi \in \mathcal H^{(n)}_{\mu,+}.$

On the other hand, since $\widetilde\psi \in H^\infty(\BB),$ there exists  $M>0$ such that
 $\sup_{z\in \BB}|R\psi(z)| \le M,$  hence, $\sup_{z\in \BB}|R^{(m)}\psi(z)|<M$  for every $m = 0, 1, \ldots, n.$ Then,
 \[\aligned
 \lim_{|z|\to 1}&(1-|z|^2)^\alpha|R^{(n)}(\psi\cdot\varphi^j_p)(z)|\\
 & \le \sum_{i=0}^n\binom{n}{i} \lim_{|z|\to 1}(1-|z|^2)^\alpha|R^{(n-i)}\psi(z)R^{(i)}\varphi^{j_0}_p(z)|\\
 &\le \sum_{i=0}^n\binom{n}{i} \lim_{|z|\to 1}(1-|z|^2)^\alpha M\frac{j_0!}{(j_0-i)!}|z_p^{j-i}|=0\endaligned\]
 for every $j=1, \ldots, n.$ Therefore, $\psi\cdot\varphi_p^j \in \mathcal H^{(n)}_{\mu,0}$  for every $j=1, \ldots, n.$
  \end{example}

    \begin{example}  Consider the weight $\mu(z) \in H(\BB),$   $\varphi \in S^*_p(\BB)$ with
    \[\mu(z) = (1-|z|^2)^\alpha,   \quad   
 \varphi_p(z)= \frac{z_p -a_p}{1 - z_p\overline{a}_p} , 
\]
    where $\alpha \in (0, 1), $ $a \in \BB$ and $\psi = \varPsi$ which is defined by \eqref{exam_1_Psi}. 
   
   First, we check that $\psi \in \mathcal H^{(n)}_{\mu,+}.$
   
   It is clear that
   \[R^{(n)}\psi(z) = \sum_{i=0}^{\infty} q^{i(\alpha-1)+\frac{\alpha}{2} +ni}z_p^{q^i}.\]
   Then, using a similar calculation as in the above example, we have
   \[\aligned
   |R^{(n)}\psi(z)| & =\bigg|\sum_{i=0}^{\infty} q^{i(\alpha-1)+\frac{\alpha}{2}+ni} z_p^{q^i}\bigg| \\
   & \ge q^{k(\alpha-1)+\frac{\alpha}{2}+nk}|z_p|^{q^k+1}-\sum_{i=0}^{k-1} q^{i(\alpha-1)+\frac{\alpha}{2}+ni}|z_p|^{q^i}-\sum_{i=k+1}^{\infty} q^{i(\alpha-1)+\frac{\alpha}{2}+ni}|z_p|^{q^i} \\
      & :=Q'_1-Q'_2-Q'_3.
     \endaligned\]
     For $z$ satisfying 
     \begin{equation}\label{gap_1a}
1-\frac{1}{q^k} \le |z_p|\le |z| \le 1-\frac{1}{q^{k+\frac32}},\end{equation} as in the above, we also   have \eqref{gap_2} and 
     \[\aligned
     Q'_1 &\ge \frac13 q^{(k+1)(n-1)+\left(k+\frac{3}{2}\right) \alpha}, \\
     Q'_2 &\le  \frac{q^{(k+1)(n-1)+\left(k+\frac{3}{2}\right) \alpha}}{q^{n+\alpha-1}-1},\\
     Q'_3 &\le   q^{(k+1)(n-1)+\left(k+\frac{3}{2}\right) \alpha} \frac{q^{n+\alpha-1}\left(\frac{1}{2}\right)^{\frac{1}{2}}}{1-q^{n+\alpha-1}\left(\frac{1}{2}\right)^{\frac{3}{2}-q^{\frac{1}{2}}}}.
     \endaligned\]
     From \eqref{gap_1a} we have $
q^{k+\frac{3}{2}} \geq \frac{1}{1-|z|}.$ 
Combining \eqref{gap_2} with the estimates for $Q'_1$, $Q'_2$, and $Q'_3,$  we get
\[
 |R^{(n)}\psi(z)|  \geq \frac14 q^{(k+1)(n-1)+\left(k+\frac32\right) \alpha}   \geq \frac14 \frac{1}{(1-|z|)^\alpha}
\]
for $z$ satisfying \eqref{gap_1a} and $q$ sufficiently large. This implies that
\begin{equation}\label{gap_3}
\lim_{|z|\to 1}(1-|z|^2)^\alpha|R^{(n)}\psi(z)| > 0.
\end{equation} 
Thus, $\psi \in \mathcal H^{(n)}_{\mu,+}.$
   
   Finally, by similar arguments and estimates as in Example 1, we can also easily prove that $\psi\cdot\varphi_p^j \in \mathcal H^{(n)}_{\mu,0}$  for every $j=1, \ldots, n.$
    \end{example}
   
   \section{Boundedness and Compactness of the Operator $W_{\psi,\varphi}$}\label{sec_5}
 In this section we will characterize the boundedness and the compactness of weighted composition operator $W_{\psi,\varphi}: \mathcal H^{(k)}_\nu \to \mathcal H^{(n)}_\mu$ in  both cases $k\ge n$ and $k < n.$ 
 
 We  need the following lemmas to prepare for proving the main theorems of the paper.
 \begin{lemma}\label{lem_A1} Assume that 
 $\varphi(0)=0$ and $\psi, \varphi_p$ satisfy the condition ($n, \mu$). Then, there exists $\lambda \in (0,1)$ such that
	\begin{equation}\label{eq_inf_A}
		\inf_{|\varphi_p| >\lambda}\mathscr B^{n-}_{j,p} := \inf_{|\varphi_p(z)|>\lambda}\mu(z)\big|\mathscr B_{j}^n(\psi; \varphi_p)(z)\big|   >0\quad\text{for every $j= 0, 1, \ldots, n.$}\end{equation}
		 \end{lemma}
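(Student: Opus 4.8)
The plan is to turn the two boundary hypotheses packaged in the $(n,\mu)$-condition --- a lower bound for $\mu\,|R^{(n)}\psi|$ together with the vanishing of $\mu\,|R^{(n)}(\psi\varphi_p^{\,j})|$ --- into a single pointwise lower bound for each $\mu\,|\mathscr B_j^n(\psi;\varphi_p)|$ on the region $\{|\varphi_p|>\lambda\}$. The bridge between the ``$|z|\to1$'' hypotheses and the ``$|\varphi_p|>\lambda$'' conclusion is a compactness observation: for every $R\in(\delta,1)$ the ball $\{|z|\le R\}$ is compact and $\varphi$ maps it into a compact subset of $\BB$, so $M_R:=\sup_{|z|\le R}|\varphi_p(z)|<1$. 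Consequently, choosing $\lambda>M_R$ forces $\{|\varphi_p|>\lambda\}\subseteq\{|z|>R\}$, i.e.\ the region on which we must estimate is pushed arbitrarily close to the boundary, where the boundary limits can do their work.

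Next I would isolate the dominant part of $\mathscr B_j^n$. Reading \eqref{formula_psi_phi} as a triangular linear system that relates the quantities $R^{(n)}(\psi\varphi_p^{\,k})$, $k=0,\dots,j$, to $\mathscr B_0^n,\dots,\mathscr B_j^n$, I would invert it and write, for suitable constants $c_{j,i}$ with $c_{j,j}\neq0$,
\[
\mathscr B_j^n(\psi;\varphi_p)=c_{j,j}\,\varphi_p^{\,j}\,R^{(n)}\psi+\sum_{i=0}^{j-1}c_{j,i}\,\varphi_p^{\,i}\,R^{(n)}\!\big(\psi\varphi_p^{\,j-i}\big).
\]
The essential point is that the leading term carries the full power $\varphi_p^{\,j}$ coupled to $R^{(n)}\psi=\mathscr B_0^n$, while every remaining term involves some $R^{(n)}(\psi\varphi_p^{\,k})$ with $k=j-i\ge1$. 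The base case $j=0$ needs nothing, since $\mathscr B_0^n=R^{(n)}\psi$.

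With this decomposition the estimate is routine. By the first part of the $(n,\mu)$-condition there are $c_0>0$ and $R_+\in(\delta,1)$ with $\mu(z)|R^{(n)}\psi(z)|\ge c_0$ for $|z|>R_+$; by the second part, the tail quantity $\eta(R):=\sup_{|z|>R}\max_{1\le k\le n}\mu(z)|R^{(n)}(\psi\varphi_p^{\,k})(z)|$ satisfies $\eta(R)\to0$ as $R\to1$. Taking $R\ge R_+$ and any $\lambda\in(M_R,1)$, on $\{|\varphi_p|>\lambda\}\subseteq\{|z|>R\}$ I would bound, using $|\varphi_p|\le1$ in the tail and $|\varphi_p|>\lambda$ in the leading term,
\[
\mu\,|\mathscr B_j^n|\ \ge\ |c_{j,j}|\,\lambda^{\,j}c_0-\Big(\sum_{i=0}^{j-1}|c_{j,i}|\Big)\eta(R).
\]
Setting $C_n:=\min_{1\le j\le n}\tfrac{|c_{j,j}|c_0}{\sum_{i<j}|c_{j,i}|}>0$ and noting $\lambda^{\,j}\ge M_R^{\,n}$ for $j\le n$, it suffices to take $R$ so close to $1$ that $\eta(R)<C_nM_R^{\,n}$; this is possible because $\eta(R)\to0$ while $M_R^{\,n}\to1$, after which the right-hand side is a positive constant, uniform in $z$ and in $j\in\{0,\dots,n\}$, giving \eqref{eq_inf_A}.

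The main obstacle is exactly the clash of regimes just exploited: the hypotheses are limits as $|z|\to1$, whereas the conclusion is an infimum over the complement of a $\varphi_p$-sublevel set, which need not collapse onto the boundary by itself. Handling it cleanly requires the compactness step, and requires ordering the choices so that nothing depends circularly on a later parameter --- one fixes $c_0,R_+$, then lets $R\to1$ to secure $\eta(R)<C_nM_R^{\,n}$, and only then selects $\lambda\in(M_R,1)$. A minor side case is $\sup_{z\in\BB}|\varphi_p|<1$, where $\{|\varphi_p|>\lambda\}$ is empty for $\lambda$ near $1$ and \eqref{eq_inf_A} holds vacuously; in the genuine case one has $M_R\to1$, which is precisely what makes the leading term $|c_{j,j}|\lambda^{\,j}c_0$ overpower the $\eta(R)$-controlled tail.
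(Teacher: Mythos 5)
Your strategy coincides with the paper's at its core: both proofs exploit the triangular relation between the family $\{R^{(n)}(\psi\varphi_p^{k})\}_{k=0}^{j}$ and the family $\{\mathscr B^n_{k}(\psi;\varphi_p)\}_{k=0}^{j}$, and then play the two halves of the $(n,\mu)$-condition against each other (the factors $\psi\varphi_p^{k}$, $k\ge1$, give vanishing terms; $R^{(n)}\psi$ gives the surviving lower bound). The difference is in execution. The paper re-derives the relation inside the proof by a recursive substitution, ending with a long identity that contains several terms $R^{(n)}\psi\cdot\varphi_p^{m}$ with different exponents $m$, and then asserts $\lim_{|\varphi_p(z)|\to1}\mu(z)|\mathscr B^n_{j}(\psi;\varphi_p)(z)|>0$; since $\varphi_p$ is complex-valued, a combination $\sum_m b_m\varphi_p^{m}$ with positive coefficients is not obviously bounded away from $0$ near the boundary, so your version --- in which the $R^{(n)}\psi$-contribution is the single monomial $c_{j,j}\varphi_p^{j}R^{(n)}\psi$, immune to self-cancellation --- is tighter. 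Your quantitative bookkeeping ($c_0$, $\eta(R)$, $M_R$, the ordering of the parameter choices, the vacuous case $\sup_{\BB}|\varphi_p|<1$) is also more careful than the paper's limit statement. Incidentally, since $\varphi(0)=0$, the Schwarz lemma gives $|\varphi_p(z)|\le|\varphi(z)|\le|z|$, which replaces your compactness bridge by one line, but your argument is fine as written.

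The one step you must not take on faith is $c_{j,j}\neq0$. If you invert \eqref{formula_psi_phi} literally as printed, a telescoping check shows the inverse is $\mathscr B^n_{j}=R^{(n)}(\psi\varphi_p^{j})-\varphi_p R^{(n)}(\psi\varphi_p^{j-1})$ for $j\ge1$; then $c_{j,j}=0$ for every $j\ge2$, both surviving terms lie in $\mathcal H^{(n)}_{\mu,0}$, and your scheme would ``show'' that $\mu|\mathscr B^n_{j}|\to0$ at the boundary, contradicting \eqref{eq_inf_A}. The resolution is that \eqref{formula_psi_phi} is misprinted: it omits multiplicities. Testing $n=1$, $j_0=2$, the Leibniz rule gives $R(\psi\varphi_p^2)=R\psi\cdot\varphi_p^2+2\psi\varphi_p R\varphi_p$, whereas \eqref{formula_psi_phi} produces $R\psi\cdot\varphi_p^2+\psi\varphi_p R\varphi_p$. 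The correct relation, obtained from the multinomial Leibniz formula (and in substance recomputed by the paper inside its own proof of this lemma), is
\begin{equation*}
R^{(n)}\big(\psi\varphi_p^{j_0}\big)=\sum_{j=0}^{j_0}\binom{j_0}{j}\varphi_p^{\,j_0-j}\,\mathscr B^n_{j}(\psi;\varphi_p),
\end{equation*}
whose inversion is
\begin{equation*}
\mathscr B^n_{j}(\psi;\varphi_p)=\sum_{i=0}^{j}\binom{j}{i}(-\varphi_p)^{i}\,R^{(n)}\big(\psi\varphi_p^{\,j-i}\big),
\end{equation*}
so that $c_{j,i}=(-1)^{i}\binom{j}{i}$ and in particular $c_{j,j}=(-1)^{j}\neq0$, exactly as you asserted. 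With this corrected relation every estimate and every parameter choice in your proposal goes through verbatim; the proof is correct once this verification is added rather than cited from \eqref{formula_psi_phi}.
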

 \begin{proof} It follows from the hypothesis $\psi \in \mathcal H^{(n)}_{\mu,+}$ and $|\varphi_p(z)|\le |z|$ that 
		 \[\lim_{|\varphi_p(z)|\to 1}\mu(z)|\mathscr B_{0}^n(\psi; \varphi_p)(z)| = \lim_{|z|\to 1}\mu(z)|\mathscr B_{0}^n(\psi; \varphi_p)(z)|= \mu(z)|R^{(n)}\psi(z)| > 0.\] 
		 Thus, \eqref{eq_inf_A} holds for $j=0.$
		 
		 Denote
		 \[K^{0,r_1,\ldots,r_s}_{i,j}=\left\{\ \begin{array}{l|l}\hskip-0.2cm\vec{k}  \in  K^0_{i,j} & \begin{array}{l} k_{r_1}=\ldots = k_{r_s} =0, \\ k_t \neq 0 \ \text{if $t\neq r_1, \ldots, r_s$}\end{array}\end{array}\hskip-0.3cm\ \right\}, \quad s = 1, \ldots, j.\]
		 
		 For any $j_0 \in \{1, \ldots, n\},$  since   a vector $\vec{k}\in K^{0, r_1, \ldots, r_s}_{i, j_0}\setminus K_{i, j_0}\subset K^{0}_{i, j_0}\setminus K_{i, j_0}$
  can be considered as $\vec{k}\in K_{i, s},$  and conversely, each vector $\vec{k}\in K_{i, s},$ there exist $j_0$ 
  vectors in $\vec{k}\in K^{0}_{i, j_0}\setminus K_{i, j_0}$
 that can be identified with it in the aforementioned sense, we have
	  \[\aligned
	  \mu(z)&R^{(n)}\big(\psi\cdot\varphi^{j_0}_p\big)(z) = \mu(z)\sum_{i=0}^n\binom{n}{i}R^{(n-i)}\psi(z)R^{(i)}\varphi^{j_0}_p(z) \\
	  &=\mu(z)R^{(n)}\psi(z)\varphi^{j_0}_p(z) + \mu(z)\sum_{i=1}^n\binom{n}{i}R^{(n-i)}\psi(z)R^{(i)}\varphi^{j_0}_p(z)\\
	 &=\mu(z)R^{(n)}\psi(z)\varphi^{j_0}_p(z) +  \mu(z)\sum_{i=1}^n\binom{n}{i}R^{(n-i)}\psi(z)\sum_{\vec{k}\in K^0_{i,j_0}\setminus K_{i,j_0}}C^i_{\vec{k}}\prod_{t=1}^{j_0}R^{(k_t)}\varphi_p(z)\\  
	 &\quad +  \mu(z)\sum_{i=j_0}^n\binom{n}{i}R^{(n-i)}\psi(z)\sum_{\vec{k}\in K_{i,j_0}}C^i_{\vec{k}}\prod_{t=1}^{j_0}R^{(k_t)}\varphi_p(z) \\
	 &=\mu(z)R^{(n)}\psi(z)\varphi^{j_0}_p(z) \\
	 &\quad+   j_0 \mu(z)\sum_{i=1}^n\binom{n}{i}R^{(n-i)}\psi(z) \sum_{\vec{k}\in K_{i,1}}C^i_{\vec{k}}R^{(i)}\varphi_p(z)\varphi^{j_0-1}_p(z) 	 \\
	 &\quad+    j_0\mu(z)\sum_{i=1}^n\binom{n}{i}R^{(n-i)}\psi(z)\sum_{\vec{k}\in K_{i,2}}C^i_{\vec{k}}\prod_{t=1}^{2}R^{(k_t)}\varphi_p(z)\varphi^{j_0-2}_p(z) 	 \\	 	
	   &\quad+ \cdots + \cdots \\
	 &\quad+  j_0\mu(z)\sum_{i=1}^n\binom{n}{i}R^{(n-i)}\psi(z)\sum_{\vec{k}\in K_{i,j_0-2}}C^i_{\vec{k}}\prod_{t=1}^{j_0-2}R^{(k_t)}\varphi_p(z)\varphi_p(z) 	 \\
	  &\quad+  j_0\mu(z)\sum_{i=1}^n\binom{n}{i}R^{(n-i)}\psi(z)\sum_{\vec{k}\in K_{i,j_0-1}}C^i_{\vec{k}}\prod_{t=1}^{j_0-1}R^{(k_t)}\varphi_p(z)\varphi^2_p(z) 	 \\
	 &\quad +\mu(z)\sum_{i=1}^n\binom{n}{i}R^{(n-i)}\psi(z)\sum_{\vec{k}\in K_{i,j_0}}C^i_{\vec{k}}\prod_{t=1}^{j_0}R^{(k_t)}\varphi_p(z)\\
	 &=j_0\mu(z)R^{(n)}\big(\psi\cdot\varphi_p\big)(z)\varphi^{j_0-1}_p(z) - (j_0-1)R^{(n)}\psi(z)\varphi^{j_0}_p(z) \\
	 &\quad+j_0\mu(z)R^{(n)}\big(\psi\cdot\varphi^2_p\big)(z)\varphi^{j_0-2}_p(z) - j_0R^{(n)}\psi(z)\varphi^{j_0-1}_p(z)\\	 
	 &\quad+ \cdots  \\
	 	 &\quad+j_0\mu(z)R^{(n)}\big(\psi\cdot\varphi^{j_0-2}_p\big)(z)\varphi^{2}_p(z) - j_0R^{(n)}\psi(z)\varphi^{3}_p(z)\\	 
		 &\quad+j_0\mu(z)R^{(n)}\big(\psi\cdot\varphi^{j_0-1}_p\big)(z)\varphi_p(z) - j_0R^{(n)}\psi(z)\varphi^{2}_p(z)\\
	 &\quad +\mathscr B^n_{j_0}(\psi; \varphi_p)(z).
	 \endaligned
	  \]
	  As in the above, by $\psi \in \mathcal H^{(n)}_{\mu, +}, $	$\psi\cdot\varphi^j_p \in \mathcal H^{(n)}_{\mu, 0} $ for every $j=1, \ldots j_0,$ 	this implies that
	\[\lim_{|\varphi_p(z)|\to 1}\mu(z)|\mathscr B_{j_0}^n(\psi; \varphi_p)(z)| >0.\]
	We have the lemma to be proved.
 \end{proof}
 
 By using reasoning similar to that in the proof of Lemma 5.1 in \cite{Qu1} for the function $\frac{\partial}{\partial z_{l_1}}\Big(\frac{\partial^{j-1}f_s}{\partial z_{l_2}\cdots\partial z_{l_j}}\Big),$ $\vec{l} \in L_j,$ we obtain a similar result and will omit its proof.

 \begin{lemma} \label{lem_I_infty}  Assume $\nu$ is a normal weight on $\BB$ and 
  \[I^{m-i}_\nu(1) = \int_0^1\int_0^{t_{m-i-1}}\cdots\int_0^{t_1}\frac{1}{\nu(t)}dtdt_1\cdots dt_{m-i-1} <\infty\] 
 holds for some $i \in \{0, 1, \ldots,m\}.$  Then, for every bounded sequence $\{f_s\}_{s \ge 1} \subset \mathcal{H}^{(m)}_\nu$ converging to 0 uniformly on compact subsets of $\BB$, we have 
\[\lim _{s \to \infty} \sup _{z \in \BB}\bigg|\frac{\partial^j f_s(\varphi(z)) }{\partial z_{\vec{l}}}
 \bigg| =0 \quad\text{for}\    j = 0, \ldots, i.\]
\end{lemma}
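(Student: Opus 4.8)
The plan is to first strip the map $\varphi$ from the problem. Since $\varphi\in S(\BB)$ is a self-map, $\varphi(z)\in\BB$ for every $z$, so
\[
\sup_{z\in\BB}\Big|\frac{\partial^j f_s(\varphi(z))}{\partial z_{\vec{l}}}\Big|\le \sup_{w\in\BB}\Big|\frac{\partial^j f_s(w)}{\partial z_{\vec{l}}}\Big|,
\]
and it suffices to prove $\lim_{s\to\infty}\sup_{w\in\BB}\big|\partial^j f_s(w)/\partial z_{\vec{l}}\big|=0$ for each fixed $j\in\{0,\dots,i\}$ and $\vec{l}\in L_j$. I would next record the monotonicity $I^{k+1}_\nu(1)=\int_0^1 I^{k}_\nu(t)\,dt\le I^k_\nu(1)$, which shows $I^{m-i}_\nu(1)<\infty\Rightarrow I^{m-j}_\nu(1)<\infty$ for all $j\le i$ (here $m-i\ge1$, so all exponents occurring are admissible). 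Thus the single hypothesis furnishes finiteness of every tail $I^{m-j}_\nu(1)-I^{m-j}_\nu(r)$, and these tails tend to $0$ as $r\to1^-$.

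The core is a boundary-versus-interior splitting of $\BB$ at radius $r$, applied to the holomorphic function $G_s:=\partial^j f_s/\partial z_{\vec{l}}$ (which, following the author's hint, one may view as $\tfrac{\partial}{\partial z_{l_1}}$ of a lower-order partial, so as to invoke the single-derivative computation of \cite[Lemma 5.1]{Qu1}). On the compact set $\{|w|\le r\}$, the uniform-on-compacts convergence $f_s\to0$ together with the Cauchy estimates (Weierstrass theorem) gives $\sup_{|w|\le r}|G_s(w)|\to0$ as $s\to\infty$ for each fixed $r$. On $\{|w|>r\}$ I would reconstruct $G_s$ along the radius: writing $w=\rho\zeta$ with $\rho=|w|$, $|\zeta|=1$, and integrating $RG_s$, one gets $G_s(\rho\zeta)=G_s(r\zeta)+\int_r^\rho \tfrac1t RG_s(t\zeta)\,dt$. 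Since $RG_s$ is a combination of $(j+1)$-order partials of $f_s$ with $j+1\le m$, estimate \eqref{eq_est_parial_f} gives $|RG_s(t\zeta)|\lesssim \big\|\delta^{\mathcal H^{(m-j-1)}_\nu}_{t\zeta}\big\|\,\|f_s\|_{\mathcal H^{(m)}_\nu}$, whence by Proposition~\ref{exam_E_delta} and the defining identity $I^{m-j}_\nu(\rho)=\int_0^\rho I^{m-j-1}_\nu(t)\,dt$,
\[
\sup_{|w|>r}\big|G_s(\rho\zeta)-G_s(r\zeta)\big|\lesssim \|f_s\|_{\mathcal H^{(m)}_\nu}\Big[(1-r)+\big(I^{m-j}_\nu(1)-I^{m-j}_\nu(r)\big)\Big]=:\|f_s\|_{\mathcal H^{(m)}_\nu}\,\tau_j(r),
\]
with $\tau_j(r)\to0$ as $r\to1^-$.

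Combining the two regions with the uniform bound $\sup_s\|f_s\|_{\mathcal H^{(m)}_\nu}<\infty$, a standard $\varepsilon/2$ argument finishes the proof: given $\varepsilon>0$, first fix $r$ close to $1$ so that $\|f_s\|_{\mathcal H^{(m)}_\nu}\,\tau_j(r)<\varepsilon/2$ uniformly in $s$, and then choose $s$ large so that $\sup_{|w|\le r}|G_s(w)|<\varepsilon/2$ (this simultaneously controls $\sup_{|w|\le r}|G_s|$ and the term $G_s(r\zeta)$ in the reconstruction). The critical instance is $j=i$, where the hypothesis $I^{m-i}_\nu(1)<\infty$ is used exactly to make $\tau_i(r)$ vanish; for $j<i$ the relevant tails $I^{m-j}_\nu(1)-I^{m-j}_\nu(r)$ are automatically smaller. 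I expect the main obstacle to be this tail estimate on $\{|w|>r\}$: controlling the radial reconstruction of the multi-index partial $\partial^j f_s/\partial z_{\vec{l}}$ by the iterated integral $I^{m-j}_\nu$ \emph{uniformly} in both $s$ and $w$, which is precisely where peeling off a single derivative and reducing to \cite[Lemma 5.1]{Qu1} does the essential work.
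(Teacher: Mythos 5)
Your proof is correct and is essentially the argument the paper intends: the paper omits the proof, deferring to the reasoning of Lemma 5.1 in \cite{Qu1} applied to the $j$-th partial viewed as $\tfrac{\partial}{\partial z_{l_1}}$ of a lower-order partial, and that reasoning is exactly the compact-part/boundary-part splitting with radial reconstruction, the point-evaluation bounds from \eqref{eq_est_parial_f} and Proposition \ref{exam_E_delta}, and the vanishing tail $I^{m-j}_\nu(1)-I^{m-j}_\nu(r)\to 0$ that you carry out in detail. Your additional observations (stripping $\varphi$ since $\varphi(\BB)\subset\BB$, and the monotonicity $I^{k+1}_\nu(1)\le I^k_\nu(1)$ giving finiteness for all $j\le i$) are sound and make the write-up self-contained.
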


Now we characterize the boundedness   of weighted composition operator $W_{\psi,\varphi}$.

 \begin{theorem}\label{thm_A1} Let $n, m \in\N_0.$  Assume that $\varphi  \in \widetilde S_p(\BB)$ for some $p\in \{{1,\ldots,N\}},$ such that the condition ($n, \mu$) satisfied.
  The  following are equivalent:  
\begin{enumerate}
	\item[\rm1)]  $ W_{\psi,\varphi}:  \mathcal{H}^{(n+m)}_{\nu} \to \mathcal{H}^{(n)}_{\mu}$ is bounded;
	\item[\rm2)] $\psi, \psi\cdot\varphi^i_p  \in \mathcal{H}^{(n)}_{\mu}$ for every  $i = 0,1,2 \ldots, $ and
		\begin{equation}\label{est_B_1} 
  \mathscr B_{j,p}^n := \sup_{z\in \BB}\mu(z)\big|\mathscr B^n_{j}(\psi; \varphi_p)(z)\big|\Big\|\delta_{\varphi_p(z)}^{\mathcal{H}^{(n+m-j)}_{\nu}}\Big\|  <\infty \quad \text{for every}\ 0\le j \le  n.\end{equation}
\end{enumerate}
In this case, 
\begin{equation}\label{norm_A}
\|W_{\psi,\varphi}\| \asymp |\psi(0)|\Big\|\delta_{\varphi(0)}^{\mathcal H^{(n+m)}_{\nu}}\Big\| + \sum_{j=0}^n \mathscr B_{j,p}^n. \end{equation}
 \end{theorem}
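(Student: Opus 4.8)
The plan is to establish the two implications separately and then read off the two-sided estimate \eqref{norm_A} from the quantitative bounds obtained along the way. The device underlying both directions is that \emph{one-variable} test functions collapse the Fa\`a di Bruno expansion onto the single component $\varphi_p$: if $f(w)=h(w_p)$ for a holomorphic $h$ on the unit disc, then $\partial^{j}f(\varphi(z))/\partial z_{\vec l}$ vanishes unless $\vec l=(p,\dots,p)$, so \eqref{eq_Bruno} together with the Newton--Leibniz formula gives, by the same computation that produced \eqref{formula_psi_phi},
\[
R^{(n)}\big(\psi\cdot(h\circ\varphi_p)\big)(z)=\sum_{j=0}^{n}\mathscr B_{j}^{n}(\psi;\varphi_p)(z)\,h^{(j)}(\varphi_p(z)).
\]
This identity links the operator directly to the component quantities $\mathscr B_{j}^{n}(\psi;\varphi_p)$ and is the backbone of the necessity proof.

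For $2)\Rightarrow 1)$ I would bound the two pieces of $\|W_{\psi,\varphi}f\|_{\mathcal H^{(n)}_{\mu}}$. The value at the origin is controlled by $|\psi(0)f(\varphi(0))|\le|\psi(0)|\,\Big\|\delta^{\mathcal H^{(n+m)}_{\nu}}_{\varphi(0)}\Big\|\,\|f\|_{\mathcal H^{(n+m)}_{\nu}}$, producing the first summand of \eqref{norm_A}. For the seminorm I start from the majorant \eqref{est_bounded_1},
\[
\mu(z)\big|R^{(n)}W_{\psi,\varphi}(f)(z)\big|\lesssim\sum_{j=0}^{n}\mu(z)\big|\mathscr B_{j}^{n}(\psi;\varphi)(z)\big|\,\Big\|\delta^{\mathcal H^{(n+m-j)}_{\nu}}_{\varphi(z)}\Big\|\,\|f\|_{\mathcal H^{(n+m)}_{\nu}},
\]
and transfer each summand from $\varphi$ to $\varphi_p$. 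Applying Lemma~\ref{lem_B_n}(a) — whose proof only needs the point $z'$ with $|\varphi_p(z')|=|\varphi(z)|$ guaranteed by the $\widetilde S_p$-property, and for which the $(n,\mu)$-condition supplies the required $\lim_{|z|\to1}(\cdot)>0$ — transfers the point evaluations at $\varphi(z)$ to evaluations at $\varphi_p(w)$, and the accompanying reduction of the symbol coefficient $\mathscr B_{j}^{n}(\psi;\varphi)$ to $\mathscr B_{j}^{n}(\psi;\varphi_p)$ brings the majorant into the form \eqref{est_B_1}. Splitting $\BB$ into $\{|\varphi_p|\le\lambda\}$, where continuity and $\psi,\psi\cdot\varphi_p^{i}\in\mathcal H^{(n)}_{\mu}$ keep everything bounded, and $\{|\varphi_p|>\lambda\}$, where \eqref{est_B_1} applies, yields boundedness together with the upper bound in \eqref{norm_A}.

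For $1)\Rightarrow 2)$ the memberships are immediate from testing: $W_{\psi,\varphi}(1)=\psi$ and $W_{\psi,\varphi}(z_p^{i})=\psi\cdot\varphi_p^{i}$, and since $1$ and the monomials $z_p^{i}$ belong to $\mathcal H^{(n+m)}_{\nu}$, boundedness forces $\psi,\psi\cdot\varphi_p^{i}\in\mathcal H^{(n)}_{\mu}$. The finiteness of each $\mathscr B_{j,p}^{n}$ is obtained from the reduction identity: feeding $f(w)=h(w_p)$ with $\|h\|_{\mathcal H^{(n+m)}_\nu}\lesssim1$ gives
\[
\mu(z_0)\Big|\sum_{j=0}^{n}\mathscr B_{j}^{n}(\psi;\varphi_p)(z_0)\,h^{(j)}(\varphi_p(z_0))\Big|\le\|W_{\psi,\varphi}\|\,\|h\|_{\mathcal H^{(n+m)}_\nu}.
\]
Writing $a=\varphi_p(z_0)$ and choosing $h$ near-extremal for the derivative point-evaluation $h\mapsto h^{(j)}(a)$ (whose norm is comparable to $\Big\|\delta_{a}^{\mathcal H^{(n+m-j)}_\nu}\Big\|$ by Proposition~\ref{exam_E_delta}), I would peel off the summands by induction on $j$: the base case $j=0$ is $\mathscr B_{0}^{n}=R^{(n)}\psi$ and follows from $\psi\in\mathcal H^{(n)}_{\mu}$, and at each step the non-degeneracy $\inf_{|\varphi_p|>\lambda}\mu\,|\mathscr B_{j}^{n}(\psi;\varphi_p)|>0$ from Lemma~\ref{lem_A1} lets the already-bounded lower-order terms be absorbed, while the region $\{|a|\le\lambda\}$ is handled by continuity. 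This gives $\mathscr B_{j,p}^{n}\lesssim\|W_{\psi,\varphi}\|$, and an extremal function for $\delta^{\mathcal H^{(n+m)}_\nu}_{\varphi(0)}$ yields $|\psi(0)|\,\Big\|\delta^{\mathcal H^{(n+m)}_\nu}_{\varphi(0)}\Big\|\lesssim\|W_{\psi,\varphi}\|$, completing the lower bound in \eqref{norm_A}.

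I expect the principal difficulty to be this last extraction. The scalar inequality above constrains only the \emph{combination} $\sum_{j}\mathscr B_{j}^{n}(\psi;\varphi_p)(z_0)h^{(j)}(a)$, so one must design $h$ so that a single derivative functional $h^{(j)}(a)$ dominates and is comparable to $\Big\|\delta_{a}^{\mathcal H^{(n+m-j)}_\nu}\Big\|$ while the remaining orders are controlled — uniformly in $z_0$, and in particular stably as $|a|\to1$. Making this triangular separation rigorous is where the recursion behind \eqref{formula_psi_phi} and the non-degeneracy of Lemma~\ref{lem_A1} must be combined. A secondary delicate point is the sufficiency transfer, where the full-symbol coefficient $\mathscr B_{j}^{n}(\psi;\varphi)$ must be dominated by its component counterpart $\mathscr B_{j}^{n}(\psi;\varphi_p)$; this is exactly where the hypothesis $\varphi\in\widetilde S_p$, rather than merely $\varphi\in S(\BB)$, is indispensable.
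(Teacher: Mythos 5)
Your sufficiency argument reproduces the paper's proof of $2)\Rightarrow 1)$ \emph{only in the special case} $\varphi(0)=0$, and the general case is a genuine gap, not a detail. Membership in $\widetilde S_p(\BB)$ gives only $0\in\varphi(\BB)$, i.e.\ $\varphi(\alpha)=0$ for some $\alpha\in\BB$, not $\varphi(0)=0$. Your parenthetical claim that Lemma~\ref{lem_B_n}(a) ``only needs the point $z'$ with $|\varphi_p(z')|=|\varphi(z)|$'' is incorrect: its proof uses the Schwarz-type inequality $|z'|\ge|\varphi(z')|\ge|\varphi_p(z')|=|\varphi(z)|>\delta_0$, which is exactly where $\varphi(0)=0$ enters, to force the auxiliary point $z'$ into the outer region $\{|z|\ge\delta_0\}$ where $h$ is bounded below (the functions $h_j(z)=\mu(z)\,|\mathscr B^n_j(\psi;\varphi_p)(z)|$ may vanish at interior points, so no lower bound is available otherwise). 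The same remark applies to Lemma~\ref{lem_A1}, whose statement assumes $\varphi(0)=0$ and whose proof uses $|\varphi_p(z)|\le|z|$; you invoke its non-degeneracy in both directions of your argument. The paper closes this gap with a second step you omit entirely: for $\varphi(\alpha)=0$, $\alpha\ne 0$, it sets $\eta=\varphi\circ\gamma_\alpha$ with $\gamma_\alpha\in Aut(\BB)$ as in \eqref{eq_Mobius} (so $\eta(0)=0$), verifies via \eqref{eq_modul_phi}, \eqref{formula_psi_phi} and Lemma~\ref{lem_B_n}(b),(c) that $(\psi,\eta_p)$ still satisfies the $(n,\mu)$-condition and the finiteness \eqref{bounded_1_eta_p}, applies the first case to conclude that $W_{\psi,\eta}$ is bounded, and then recovers boundedness of $W_{\psi,\varphi}$ from the factorization $W_{\psi,\eta}=W_{\psi,\varphi}\circ C_{\gamma_\alpha}$ together with Lemma~\ref{lem_prepare_3} (that $C_{\gamma_\alpha}$ is a homeomorphism of $\mathcal H^{(n)}_\omega$). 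Without this reduction your proof does not cover the hypothesis of the theorem.

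Your necessity direction also diverges from the paper and is incomplete where it diverges. The paper obtains $1)\Rightarrow 2)$, and the two-sided estimate \eqref{norm_A}, essentially as a corollary of Theorem 4.1 in \cite{Qu1} (the full-symbol characterization on $\mathcal H^{(n+m)}_\nu\to\mathcal H^{(n)}_\mu$): boundedness gives $\psi,\psi\cdot\varphi_p^i\in\mathcal H^{(n)}_\mu$ and the finiteness of the full-symbol quantities $\mathscr B^n_j$, from which \eqref{est_B_1} is deduced. You instead propose a from-scratch extraction with one-variable test functions $f(w)=h(w_p)$; the reduction identity you write is legitimate (it is \eqref{formula_psi_phi} in disguise), but you yourself flag that the core step — constructing $h$ so that a single derivative functional $h^{(j)}(\varphi_p(z_0))$ is near-extremal, comparable to $\big\|\delta^{\mathcal H^{(n+m-j)}_\nu}_{\varphi_p(z_0)}\big\|$, while all other orders are dominated, uniformly as $|\varphi_p(z_0)|\to1$ — is left unproven, and your inductive absorption again rests on Lemma~\ref{lem_A1}, hence on $\varphi(0)=0$. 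So as written, neither implication is established under the stated hypotheses; what you have is a correct outline of the paper's first bullet case of sufficiency plus a sketch, not a proof, of a harder self-contained route to necessity.
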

\begin{proof} First,   using the same argument as in the proof of Theorem 4.1 in \cite{Qu1}, we obtain \eqref{norm_A} in the case where $ W_{\psi,\varphi}:  \mathcal{H}^{(n+m)}_{\nu} \to \mathcal{H}^{(n)}_{\mu}$ is bounded.

1) $\Rightarrow$ 2): 
It follows from Theorem 4.1 in \cite{Qu1} that $\psi, \psi\cdot\varphi^i_p  \in \mathcal{H}^{(n)}_{\mu}$ for every  $i = 0,1,2 \ldots, $ and 
  \[\mathscr B_{j}^n := \sup_{z\in \BB}\mu(z)\big|\mathscr B^n_{j}(\psi; \varphi)(z)\big|\Big\|\delta_{\varphi(z)}^{\mathcal{H}^{(n+m-j)}_{\nu}}\Big\|  <\infty,\] 
  hence, \eqref{est_B_1} holds.
  
 2) $\Rightarrow$ 1): By the hypothesis $\psi, \psi\cdot\varphi^i_p  \in \mathcal{H}^{(n)}_{\mu}$ for every  $i = 0,1,2 \ldots, $ by induction on $j,$  a proof step of Theorem 4.1 in [Qu] has shown that
\begin{equation}\label{eq_B+-}  \mathscr B_{j}^{n-} := \sup_{z\in\BB}\mu(z)\big|\mathscr B_{j}^n(\psi; \varphi)(z)\big| <\infty,\end{equation}
 for any $j=0, 1, \ldots, n.$  
 
 Since $\varphi \in \widetilde S_p(\BB),$ there exists $\alpha \in \BB$ such that $\varphi(\alpha) = 0.$ 
 
 $\bullet$ First, we consider the case $\alpha=0,$ i.e., $\varphi(0) = 0.$
 
By Lemma \ref{lem_A1}, there exists $\lambda \in (0,1)$ such that $\inf_{|\varphi_p|>\lambda}\mathscr B^{n-}_{j,p} >0. $

Combinging with \eqref{eq_B+-},  we have
	\[D_j := \frac{\mathscr B_{j}^{n-}}{\inf_{|\varphi_p|>\lambda}\mathscr B^{n-}_{j,p}} <\infty.\]
 
 Then, by $\varphi \in \widetilde S_p(\BB),$ for each $z \in \BB,$ $|\varphi(z)|>\lambda$ (hence, $|z|>\lambda$)  there exists $z' \in \BB,$ such that $|\varphi(z)| = |\varphi_p(z')|$ (hence, $|z'|>\lambda$). Therefore,
  by appliying Lemma \ref{lem_B_n} to the functions $h_j(z) := \mu(z)\mathscr B_{j}^n(\psi; \varphi_p)(z),$ from the estimate \eqref{est_bounded_1} we have
 \begin{equation}\label{es_final}\aligned
 \mu(z)|R^{(n)}&W_{\psi,\varphi}(f)(z)| \lesssim \sum_{j=0}^n D_j  \mu(z')\big|\mathscr B_{j}^n(\psi; \varphi_p)(z')\big|\Big\|\delta_{\varphi(z)}^{\mathcal{H}^{(n+m-j)}_\nu}\Big\| \|f\|_{\mathcal{H}^{(n+m)}_\nu} \\
 &\lesssim \sum_{j=0}^n D_j  \sup_{w\in \BB}\mu(w)\big|\mathscr B_{j}^n(\psi; \varphi_p)(w)\big|\Big\|\delta_{\varphi_p(w)}^{\mathcal{H}^{(n+m-j)}_\nu}\Big\| \|f\|_{\mathcal{H}^{(n+m)}_\nu}\\
 &= \bigg(\sum_{j=0}^n D_j   \mathscr B_{j,p}^n \bigg)\|f\|_{\mathcal{H}^{(n+m)}_\nu}.
 \endaligned\end{equation}
 Consequently, 
	\[
		\|W_{\psi,\varphi}f(z)\|_{s\mathcal H^{(n))}_\mu} \lesssim \bigg(\sum_{j=0}^n  \mathscr B_{j,p}^n \bigg)\|f\|_{\mathcal{H}^{(n+m)}_\nu}.
	 \]
	 This implies that $W_{\psi, \varphi}$ is bounded.
	 
	 \medskip
	$\bullet$ Next, we consider the case   $\alpha \neq 0,$  i.e.,  $\varphi(\alpha) = 0.$   
	
       Let $\gamma_\alpha \in Aut(\mathbb B)$ given by \eqref{eq_Mobius}. Then $\eta := \varphi \circ \gamma$ satisfies $\eta(0) = 0$ because $\varphi(\alpha) = 0.$ Since $\gamma$ is an automorphism, it is obvious that $\eta \in \widetilde S_p(\BB).$ It is clear that $\eta_p(z) = \varphi_p(\gamma_\alpha(z)).$
       
       The proof of the boundedness of $W_{\psi, \eta}$ will be completed by applying the case $\alpha = 0$ above after verifying that $\psi,$ $\eta_p$ satisfy the condition ($n, \omega$) and
 \begin{equation}\label{bounded_1_eta_p} 
  \mathscr B_{j,\eta_p}^n := \sup_{z\in \BB}\mu(z)\big|\mathscr B^n_{j}(\psi; \eta_p)(z)\big|\Big\|\delta_{\eta_p(z)}^{\mathcal{H}^{(n+m-j)}_{\nu}}\Big\|  <\infty\quad\text{for every $0\le j \le  n.$}
  \end{equation}

It follows from \eqref{eq_modul_phi} that
\[|z|\to 1\quad \Leftrightarrow \quad |\gamma_\alpha(z)| \to 1.\]
This implies that, for every $j=1, \ldots, n,$
    \begin{equation}\label{B_Reta_Rphi}
  \aligned
  \lim_{|z|\to 1} \mu(z)&|\mathscr B^n_{j}(\psi; \eta_p)(z)| \\
  &=  \lim_{|\gamma_\alpha(z)|\to 1} \Bigg|\sum_{i=j}^n\binom{n}{i}\mu(\gamma_\alpha(z))R^{(n-i)}(\psi(\gamma_\alpha(z))) \mathfrak B_{i,j}(R\varphi_p(\gamma_\alpha(z))\Bigg| \\
   &= \lim_{|z'|\to 1}\mu(z')|\mathscr B^n_{j}(\psi; \varphi_p)(z')|, \\
   \lim_{|z|\to1}\mu(z)&|R^{(j)}\eta_p(z)| =\lim_{|\gamma_\alpha(z)|\to1}\mu(z)|R^{(j)}\varphi_p(\gamma_\alpha(z))| \\
   & =  \lim_{|z'|\to1}\mu(z')|R^{(j)}\varphi_p(z')|. 
    \endaligned
    \end{equation}
      
      Since $ \psi\cdot\varphi^j_p \in \mathcal H^{(n)}_{\mu,0},$ by \eqref{formula_psi_phi} and  \eqref{B_Reta_Rphi}, 
 we have
\[\aligned 
\lim_{|z|\to1}\mu(z)\Big|R^{(n)}\Big(\psi\cdot \eta^{j}_p(z)\Big)\Big|
&=\lim_{|z|\to1}\bigg|\sum_{i=0}^{j_0}\mu(z)\mathscr B^n_{j_0-i}(\psi; \eta_p)(z)\eta^i_p(z)\bigg| \\
&= \lim_{|z'|\to1}\bigg|\sum_{i=0}^{j_0}\mu(z')\mathscr B^n_{j_0-i}(\psi; \varphi_p)(z')\varphi^i_p(z')\bigg|\\
&=\lim_{|z'|\to1}\mu(z')\Big|R^{(n)}\Big(\psi\cdot \varphi^{j}_p(z')\Big)\Big| = 0.\\
\endaligned\] 
Thus, $ \psi\cdot\eta^j_p \in \mathcal H^{(n)}_{\mu,0}$ for every $j = 1, \ldots, n.$ 

   \medskip
    Now we check   \eqref{bounded_1_eta_p}.
   
   Note that, by \eqref{estMRphi} and $\psi \in \mathcal H^{(n)}_{\mu}$ we have    
    \[\sup_{z\in \BB}\mu(z)|\mathscr B_{j}^n(\psi; \eta_p)(z)| <\infty.\] 
  By a similar proof to that of \eqref{eq_inf_A}, we also obtain.  
		\[\inf_{|w|>\lambda}\mathscr B^{n-}_{j,p} := \inf_{|w|>\lambda}\mu(w)\big|\mathscr B_{j}^n(\psi; \varphi_p)(w)\big|   >0\quad\text{for every $j= 0, 1, \ldots, n$}\]
		 and for some $\lambda \in (0, 1).$
Thus
\[D'_j := \frac{\sup_{z\in \BB}\mu(z)|\mathscr B_{j}^n(\psi; \eta_p)(z)|}{\inf_{|w|>\lambda}\mathscr B^{n-}_{j,p}} <\infty.\]

Therefore,
  by appliying Lemma \ref{lem_B_n}(b) to the functions $h_j(z') := \mu(z')\mathscr B_{j}^n(\psi(z'); R\varphi_p(z')),$ from the estimate \eqref{est_bounded_1}  we have
\[\aligned
\mu(z)&\big|\mathscr B^n_{j}(\psi; \eta_p)(z)\big|\Big\|\delta_{\eta_p(z)}^{\mathcal{H}^{(n+m-j)}_{\nu}}\Big\|  \\
&\le\frac{\sup_{z\in \BB}\mu(z)\big|\mathscr B^n_{j}(\psi; \eta_p)(z)|}{\inf_{|w|>\lambda}\mu(w)\big|\mathscr B^n_{j}(\psi; \varphi_p)(w)|}\mu(z')\big|\mathscr B^n_{j}(\psi; \varphi_p)(z')\Big\|\delta_{\varphi_p(z')}^{\mathcal{H}^{(n+m-j)}_{\nu}}\Big\|\\
&\le D'_j\mathscr B^n_{j,p}<\infty
\endaligned\]
for every $|z|>\lambda.$ On the other hand, it is obvious that
\[\sup_{|z|\le \lambda}\mu(z)\big|\mathscr B^n_{j}(\psi; \eta_p)(z)\big|\Big\|\delta_{\eta_p(z)}^{\mathcal{H}^{(n+m-j)}_{\nu}}\Big\|  <\infty.\]
Hence, \eqref{bounded_1_eta_p} is proved.

Thus, $W_{\psi,\eta}$ is bounded.

	Now, it is easy to check that $W_{\psi,\eta} = W_{\psi,\varphi}\circ C_\gamma.$  
 	Then by Lemma \ref{lem_prepare_3}, $W_{\psi,\varphi}$ is bounded, hence, 2) $\Rightarrow$ 1) is proved.

The proof of Theorem is completed.
\end{proof}

\begin{theorem}\label{thm_A2} Let $n, m \in\N_0.$  Assume that $\varphi  \in \widetilde S_p(\BB)$ for some $p\in \{{1,\ldots,N\}},$ such that such that the condition ($n+m, \mu$) satisfied.

Then, the  following are equivalent:  
\begin{enumerate}
	\item [\rm1)] $ W_{\psi,\varphi}:  \mathcal{H}^{(n)}_{\nu} \to \mathcal{H}^{(n+m)}_{\mu}$ is bounded;
	\item[\rm2)] $\psi, \psi\cdot\varphi^i_p \in \mathcal{H}^{(n+m)}_{\mu}$ for every  $i = 0,1,2 \ldots, $  and 
		\begin{equation}\label{est_B_4} 
  \mathscr B^{n+m}_{j,p} := \sup_{z\in \BB}\mu(z)\big|\mathscr B^{n+m}_{j}(\psi; \varphi_p)(z)\big|\Big\|\delta_{\varphi_p(z)}^{\mathcal{H}^{(n-j)}_{\nu}}\Big\|  <\infty\quad \text{for every}\ 0\le j \le  n.\end{equation}
\begin{equation}\label{est_B_5} 
  \mathscr B^{n+m}_{n+k,p} := \sup_{z\in \BB}\mu(z)\frac{\big|\mathscr B^{n+m}_{n+k}(\psi; \varphi_p)(z)\big|}{\nu(\varphi_p(z))(1-|\varphi_p(z)|^2)^k} <\infty\quad \text{for every}\ 1\le k \le  m.\end{equation}
\end{enumerate}
In this case  
\begin{equation}\label{norm_A2}
\|W_{\psi,\varphi}\| \asymp |\psi(0)|\Big\|\delta_{\varphi_p(0)}^{H^{(n)}_{\nu^p}}\Big\| + \sum_{j=0}^{n+m} \mathscr B^{n+m}_{j,p}. \end{equation}
 \end{theorem}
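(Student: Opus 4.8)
The plan is to follow the proof of Theorem~\ref{thm_A1}, adapting it to the situation in which the operator raises the order of differentiation from $n$ to $n+m$. First I would record the master pointwise estimate. Expanding $R^{(n+m)}(\psi\cdot(f\circ\varphi))(z)$ by the Newton--Leibniz formula and \eqref{eq_Bruno} and collecting terms as in \eqref{eq_est_parial_f_more2}, the coefficient of each $\partial^j f(\varphi(z))/\partial z_{\vec l}$ is exactly $\mathscr B^{n+m}_j(\psi;\varphi)(z)$. Since now $f\in\mathcal H^{(n)}_\nu$ while $j$ runs up to $n+m$, the two lines of \eqref{eq_est_parial_f} force a split: for $0\le j\le n$ one bounds $|\partial^j f(\varphi(z))|$ by $\|\delta^{\mathcal H^{(n-j)}_\nu}_{\varphi(z)}\|\,\|f\|_{\mathcal H^{(n)}_\nu}$, whereas for $j=n+k$ with $1\le k\le m$ one uses the growth estimate $|\partial^{n+k}f(\varphi(z))|\lesssim\|f\|_{\mathcal H^{(n)}_\nu}\big/\big(\nu(\varphi(z))(1-|\varphi(z)|^2)^k\big)$. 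This produces
\[
\mu(z)\big|R^{(n+m)}W_{\psi,\varphi}(f)(z)\big|\lesssim\bigg(\sum_{j=0}^n\mu(z)\big|\mathscr B^{n+m}_j(\psi;\varphi)(z)\big|\,\big\|\delta^{\mathcal H^{(n-j)}_\nu}_{\varphi(z)}\big\|+\sum_{k=1}^m\frac{\mu(z)\big|\mathscr B^{n+m}_{n+k}(\psi;\varphi)(z)\big|}{\nu(\varphi(z))(1-|\varphi(z)|^2)^k}\bigg)\|f\|_{\mathcal H^{(n)}_\nu},
\]
the analogue of \eqref{est_bounded_1} with the extra $m$ terms carrying precisely the growth weights of \eqref{est_B_5}.

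For $1)\Rightarrow 2)$ I would apply $W_{\psi,\varphi}$ to the monomials $f(w)=w_p^i$, for which $W_{\psi,\varphi}(w_p^i)=\psi\cdot\varphi_p^i$, yielding $\psi,\psi\cdot\varphi_p^i\in\mathcal H^{(n+m)}_\mu$; the finiteness of the component quantities in \eqref{est_B_4} and \eqref{est_B_5} then follows from the full-symbol characterization in \cite{Qu1} combined with the transfer used in the converse, exactly as in Theorem~\ref{thm_A1}. The substantive direction is $2)\Rightarrow 1)$. Because $\varphi\in\widetilde S_p(\BB)$ there is $\alpha\in\BB$ with $\varphi(\alpha)=0$; I would first treat $\alpha=0$ and then reduce the general case to it via a Möbius conjugation.

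In the case $\varphi(0)=0$ I would run the ratio trick. From $\psi\cdot\varphi_p^i\in\mathcal H^{(n+m)}_\mu$ and \eqref{formula_psi_phi}, an induction on $j$ as in the derivation of \eqref{eq_B+-} secures the finiteness of the full-symbol sup-bounds $\mathscr B^{(n+m)-}_j:=\sup_z\mu(z)|\mathscr B^{n+m}_j(\psi;\varphi)(z)|$ for $0\le j\le n+m$, while the version of Lemma~\ref{lem_A1} with $n$ replaced by $n+m$ (licensed by the assumed $(n+m,\mu)$-condition) gives $\inf_{|\varphi_p(z)|>\lambda}\mu(z)|\mathscr B^{n+m}_j(\psi;\varphi_p)(z)|>0$ for a suitable $\lambda\in(0,1)$. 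Setting $D_j$ to be the quotient of these and using $\varphi\in\widetilde S_p(\BB)$ to select, for each $z$ with $|\varphi(z)|>\lambda$, a point $z'$ with $|\varphi_p(z')|=|\varphi(z)|$, I would replace each factor $\mu(z)|\mathscr B^{n+m}_j(\psi;\varphi)(z)|$ by $D_j\,\mu(z')|\mathscr B^{n+m}_j(\psi;\varphi_p)(z')|$. In the point-evaluation sum the identification $\|\delta^{\mathcal H^{(n-j)}_\nu}_{\varphi(z)}\|\asymp\|\delta^{\mathcal H^{(n-j)}_\nu}_{\varphi_p(z')}\|$ is handled by Lemma~\ref{lem_B_n}(a), whose proof only invokes the defining property of $\widetilde S_p(\BB)$ and so applies here, taken with $h_j=\mu|\mathscr B^{n+m}_j(\psi;\varphi_p)|$; in the growth sum the factor $1\big/\big(\nu(\varphi(z))(1-|\varphi(z)|^2)^k\big)$ equals $1\big/\big(\nu(\varphi_p(z'))(1-|\varphi_p(z')|^2)^k\big)$ outright, since $\nu$ is radial and $|\varphi(z)|=|\varphi_p(z')|$. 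Passing to suprema bounds both sums by $\sum_{j=0}^n D_j\mathscr B^{n+m}_{j,p}+\sum_{k=1}^m D_{n+k}\mathscr B^{n+m}_{n+k,p}$, giving boundedness, and \eqref{norm_A2} then follows by tracking the constants, the zero-order term coming from evaluation at $z=0$.

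For $\alpha\neq 0$ I would put $\gamma=\gamma_\alpha$, $\eta=\varphi\circ\gamma$, so that $\eta(0)=0$, $\eta\in\widetilde S_p(\BB)$ and $\eta_p=\varphi_p\circ\gamma$, verify that $\psi,\eta_p$ inherit the $(n+m,\mu)$-condition together with \eqref{est_B_4} and \eqref{est_B_5}, apply the case $\alpha=0$ to $W_{\psi,\eta}$, and finally use $W_{\psi,\eta}=W_{\psi,\varphi}\circ C_\gamma$ with Lemma~\ref{lem_prepare_3} to transfer boundedness back to $W_{\psi,\varphi}$. I expect the main obstacle, and the genuinely new feature relative to Theorem~\ref{thm_A1}, to be the growth-weight condition \eqref{est_B_5} under this conjugation: bounding $\mu(z)|\mathscr B^{n+m}_{n+k}(\psi;\eta_p)(z)|$ uses \eqref{estMRphi} and $\psi\in\mathcal H^{(n+m)}_\mu$ through limit and sup identities of the type \eqref{B_Reta_Rphi}, but the factor $(1-|\eta_p(z)|^2)^{-k}$ must be matched against the $\varphi_p$-bound by means of the modulus identity \eqref{eq_modul_phi} and the weight ratios controlled in Lemma~\ref{lem_B_n}(c). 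The delicate point is that the chain rule intertwines $\psi$ and the radial derivatives of $\eta_p$ at $z$ with those of $\varphi_p$ at $\gamma(z)$, so this matching cannot be done by naive substitution and is where the bulk of the new estimation will lie.
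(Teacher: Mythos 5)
Your proposal follows essentially the same route as the paper's proof: the split master estimate from \cite{Qu1} (point-evaluation factors for $0\le j\le n$, growth factors for $j=n+k$), the direction $1)\Rightarrow 2)$ via the full-symbol characterization of Theorem 4.2 in \cite{Qu1}, and the direction $2)\Rightarrow 1)$ via the sup/inf ratio constants $D_j$, Lemma~\ref{lem_A1} in its $(n+m,\mu)$ form, Lemma~\ref{lem_B_n}, the point $z'$ supplied by $\varphi\in\widetilde S_p(\BB)$, and finally the M\"obius reduction $W_{\psi,\eta}=W_{\psi,\varphi}\circ C_\gamma$ with Lemma~\ref{lem_prepare_3}. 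The only divergence is your anticipated ``bulk of new estimation'' for \eqref{est_B_5} under conjugation: the paper handles that step by the very same ratio trick (bounding the $\eta_p$-quantity by its supremum over the infimum times the $\varphi_p$-quantity at $z'$, where the radial factors $\nu(\cdot)(1-|\cdot|^2)^k$ match because $|\varphi(z)|=|\varphi_p(z')|$), rather than by any additional argument involving \eqref{eq_modul_phi} or Lemma~\ref{lem_B_n}(c).
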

\begin{proof} First,   using the same argument as in the proof of Theorem 4.2 in \cite{Qu1}, we obtain \eqref{norm_A2} in the case where $ W_{\psi,\varphi}:  \mathcal{H}^{(n)}_{\nu} \to \mathcal{H}^{(n+m)}_{\mu}$ is bounded.

1) $\Rightarrow$ 2): It follows from Theorem 4.2 in \cite{Qu1} that $\psi, \psi\cdot\varphi^i_p  \in \mathcal{H}^{(n+m)}_{\mu}$ for every  $i = 0,1,2 \ldots, $ and 
  \[\mathscr B^{n+m}_{j} := \sup_{z\in \BB}\mu(z)\big|\mathscr B^{n+m}_{j}(\psi; \varphi)(z)\big|\Big\|\delta_{\varphi(z)}^{\mathcal{H}^{(n-j)}_{\nu}}\Big\|  <\infty \quad\text{for every $0\le j \le  n;$}\] 
  \[\mathscr B^{n+m}_{n+k} := \sup_{z\in \BB}\mu(z)\frac{\big|\mathscr B^{n+m}_{n+k}(\psi; \varphi)(z)\big|}{\nu^p(\varphi(z))(1-|\varphi(z)|^2)^k} <\infty\quad\text{for every $1\le k \le  m,$}\] 
    hence, \eqref{est_B_4} and \eqref{est_B_5} are true.
 
 2) $\Rightarrow$ 1):  As in the proof of Theorem \ref{thm_A1}, we also consider two cases.
 
  $\bullet$ The case  $\varphi(0) = 0.$
  
  As in the previous theorem, for $0\le j \le  n+m$ we have $D_j <\infty$ and for each $z \in \BB,$ $|\varphi(z)|>\lambda$ (hence, $|z|>\lambda$)  there exists $z' \in \BB,$ such that $|\varphi(z)| = |\varphi_p(z')|$ (hence, $|z'|>\lambda$). By an estimate in the proof of Theorem 4.2 in \cite{Qu1} we have
 \[\aligned
 \mu(z)|R^{(n+m)}W_{\psi,\varphi}(f)|  &\lesssim \sum_{j=0}^{n}  \mu(z)\big|\mathscr B^{n+m}_{j}(\psi; \varphi)(z)\big|\Big\|\delta_{\varphi(z)}^{\mathcal{H}^{(n-j)}_\nu}\Big\|\|f\|_{\mathcal{H}^{(n)}_\omega}  \\
 &\quad  + \sum_{k=1}^m\frac{\big|\mathscr B^{n+m}_{n+k}(\psi; \varphi)(z)\big|}{\nu(\varphi(z))(1-|\varphi(z)|^2)^k}\|f\|_{\mathcal{H}^{(n)}_\omega} \\
  &\le \sum_{j=0}^{n}   D_j\sup_{w\in \BB}\mu(w)\big|\mathscr B_{j}^n(\psi; \varphi_p)(w)\big|\Big\|\delta_{\varphi_p(w)}^{\mathcal{H}^{(n-j)}_\nu}\Big\| \|f\|_{\mathcal{H}^{(n)}_\nu}  \\
  &\quad  + \sum_{k=1}^mD_{n+k}\frac{\mu(z')\big|\mathscr B^{n+m}_{n+k}(\psi; \varphi_p)(z')\big|}{\nu(\varphi_p(z'))(1-|\varphi_p(z')|^2)^k}\|f\|_{\mathcal{H}^{(n)}_\omega} \\
    &\le \sum_{j=0}^{n}   D_j\sup_{w\in \BB}\mu(w)\big|\mathscr B_{j}^n(\psi; \varphi_p)(w)\big|\Big\|\delta_{\varphi_p(w)}^{\mathcal{H}^{(n-j)}_\nu}\Big\| \|f\|_{\mathcal{H}^{(n)}_\nu}  \\
      &\quad  + \sum_{k=1}^mD_{n+k}\sup_{w\in \BB}\frac{\mu(w)\big|\mathscr B^{n+m}_{n+k}(\psi; \varphi_p)(w)\big|}{\nu(\varphi_p(w))(1-|\varphi_p(w)|^2)^k}\|f\|_{\mathcal{H}^{(n)}_\nu} \\
  &\lesssim \sum_{j=0}^{n+m} \mathscr B^{n+m}_{j,p}\|f\|_{\mathcal{H}^{(n)}_\nu} \quad\text{ for every $z\in \BB.$}
 \endaligned\]
  Thus, $W_{\psi,\varphi}$ is bounded.
  
  \medskip
  $\bullet$ The case  $\varphi(\alpha) = 0,$ $\alpha \in \BB\setminus \{0\}.$
  
  Consider   $\gamma_\alpha \in Aut(\mathbb B)$ given as in the proof of the Theorem \ref{thm_A1},  at the same time, $\psi, \eta_p$ satisfy the condition ($n+m, \mu$) and
  \[\sup_{z\in \BB}\mu(z)\big|\mathscr B_{j}^n(\psi; \eta_p)(z)\big|\Big\|\delta_{\eta_p(z)}^{\mathcal{H}^{(n-j)}_\nu}\Big\|<\infty\quad\text{for $0\le j \le  n.$}\]

  It remain to check that
  \[
  \mathscr B^{n+m}_{n+k,\eta_p} := \sup_{z\in \BB}\mu(z)\frac{\big|\mathscr B^{n+m}_{n+k}(\psi; \eta_p)(z)\big|}{\nu(\eta_p(z))(1-|\eta_p(z)|^2)^k} <\infty\quad\text{for every $1\le k \le  m.$}\]

We have
\[\aligned
\mu(z)&\frac{\big|\mathscr B^{n+m}_{n+k}(\psi; \eta_p)(z)\big|}{\nu(\eta_p(z))(1-|\eta_p(z)|^2)^k} \\
&\le \frac{\frac{\mu(z)\big|\mathscr B^{n+m}_{n+k}(\psi; \eta_p)(z)\big|}{\nu(\varphi_p(z))(1-|\varphi_p(z)|^2)^k}}{\inf_{w\in \BB}\frac{\mu(w)\big|\mathscr B^{n+m}_{n+k}(\psi; \eta_p)(z)\big|}{\nu(\varphi_p(z))(1-|\varphi_p(z)|^2)^k}}\frac{\mu(z')\big|\mathscr B^{n+m}_{n+k}(\psi; \varphi_p)(z')\big|}{\nu(\varphi_p(z'))(1-|\varphi_p(z')|^2)^k} \\
&\le D_{n+k}\mathscr B^{n+m}_{n+k,p} <\infty.
\endaligned\]
Thus, $  \mathscr B^{n+m}_{n+k,\eta_p} <\infty.$
\end{proof}
Finally, we characterize the compactness   of weighted composition operator $W_{\psi,\varphi}$.

\begin{theorem}\label{thm_C1}  Assume that $\varphi  \in \widetilde S_p(\BB)$ for some $p\in \{{1,\ldots,N\}},$ such that   the condition ($n, \mu$) satisfied and there exists $n_0 \in \{0, \ldots,n+1\}$ such that
  \[I^{n+m-n_0+1}_\nu(1) <\infty = I^{n+m-n_0}_\nu(1).\]
   Then, the  following are equivalent:  
  \begin{enumerate}
	\item[\rm1)]   $ W_{\psi,\varphi}:  \mathcal{H}^{(n+m)}_{\nu} \to \mathcal{H}^{(n)}_{\mu}$ is compact;
	\item[\rm2)] $\psi, \psi\cdot\varphi^i_p  \in \mathcal{H}^{(n)}_{\mu}$ for every  $i = 0,1,2 \ldots, $  and for every $n_0\le j \le  n+1:$
		\begin{equation}\label{est_C_A} 
	 \lim_{r\to 1}\sup_{|\varphi_p(z)|>r}\mu(z)\big|\mathscr B^n_{j}(\psi; \varphi_p)(z)\big|\Big\|\delta_{\varphi_p(z)}^{\mathcal{H}^{(n+m-j)}_{\nu}}\Big\|= 0.\end{equation}
\end{enumerate}
  \end{theorem}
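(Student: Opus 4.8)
The plan is to follow the architecture of Theorem \ref{thm_A1}, but with the norm estimate replaced by the sequential criterion for compactness: $W_{\psi,\varphi}$ is compact if and only if $\|W_{\psi,\varphi}f_s\|_{\mathcal H^{(n)}_\mu}\to 0$ for every bounded sequence $\{f_s\}\subset\mathcal H^{(n+m)}_\nu$ with $f_s\to 0$ uniformly on compact subsets of $\BB$. Exactly as in Theorem \ref{thm_A1}, I would first remove the general self-map by passing to $\eta:=\varphi\circ\gamma_\alpha$, where $\gamma_\alpha$ is the M\"obius map of \eqref{eq_Mobius} with $\varphi(\alpha)=0$; since $W_{\psi,\eta}=W_{\psi,\varphi}\circ C_\gamma$ and $C_\gamma$ is a homeomorphism of $\mathcal H^{(n+m)}_\nu$ by Lemma \ref{lem_prepare_3}, compactness of $W_{\psi,\varphi}$ is equivalent to that of $W_{\psi,\eta}$. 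The transfer identities of the type \eqref{B_Reta_Rphi}, together with $|z|\to1\Leftrightarrow|\gamma_\alpha(z)|\to1$ from \eqref{eq_modul_phi}, show that condition \eqref{est_C_A} and the $(n,\mu)$-condition are preserved under this change, so it suffices to treat the base case $\varphi(0)=0$.

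For the implication $2)\Rightarrow1)$, I would take such a sequence $\{f_s\}$ and start from the pointwise bound \eqref{eq_est_parial_f_more2}, written as $\mu(z)|R^{(n)}W_{\psi,\varphi}(f_s)(z)|\lesssim\sum_{j=0}^n|\partial^j f_s(\varphi(z))/\partial z_{\vec l}|\,\mu(z)|\mathscr B^n_j(\psi;\varphi)(z)|$, and split the sum at the threshold $n_0$. For $j\le n_0-1$ the hypothesis $I^{n+m-n_0+1}_\nu(1)<\infty$ matches Lemma \ref{lem_I_infty} with $i=n_0-1$, giving $\sup_z|\partial^j f_s(\varphi(z))/\partial z_{\vec l}|\to 0$, while $\mu(z)|\mathscr B^n_j(\psi;\varphi)(z)|$ stays bounded by \eqref{eq_B+-} (which follows from the membership hypothesis via the induction argument of Theorem \ref{thm_A1}); hence these terms vanish as $s\to\infty$. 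For the indices $j$ with $n_0\le j\le n+1$ appearing in \eqref{est_C_A} I would fix $r<1$ and split $\sup_z$ into $|\varphi(z)|\le r$ and $|\varphi(z)|>r$: on the compact part $\partial^j f_s$ converges to $0$ uniformly by Montel's theorem, and on $|\varphi(z)|>r$ I bound $|\partial^j f_s(\varphi(z))/\partial z_{\vec l}|\lesssim\|\delta^{\mathcal H^{(n+m-j)}_\nu}_{\varphi(z)}\|\,\|f_s\|_{\mathcal H^{(n+m)}_\nu}$ by \eqref{eq_est_parial_f} and then transfer from $\varphi$ to $\varphi_p$ precisely as in \eqref{es_final}, using $\varphi\in\widetilde S_p(\BB)$ with Lemma \ref{lem_B_n} and Remark \ref{rmk_h}, so the tail is controlled by $\sup_{|\varphi_p(w)|>r}\mu(w)|\mathscr B^n_j(\psi;\varphi_p)(w)|\,\|\delta^{\mathcal H^{(n+m-j)}_\nu}_{\varphi_p(w)}\|$. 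The point-value contribution $|\psi(0)f_s(\varphi(0))|$ tends to $0$ trivially. Letting $s\to\infty$ and then $r\to1$ and invoking \eqref{est_C_A} yields $\|W_{\psi,\varphi}f_s\|_{s\mathcal H^{(n)}_\mu}\to0$, i.e. compactness.

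For the implication $1)\Rightarrow2)$, compactness forces boundedness, so Theorem \ref{thm_A1} already gives $\psi,\psi\cdot\varphi^i_p\in\mathcal H^{(n)}_\mu$ for all $i$. To extract the limits \eqref{est_C_A} I would argue by contradiction: if one of them failed for some index $j\ge n_0$, there would exist $r_s\to1$ and $z_s$ with $|\varphi_p(z_s)|>r_s$ along which the quantity stays bounded below. Using the kernel estimates \eqref{eq_est_parial_f} and the point-evaluation functionals $\delta^{\mathcal H^{(n+m-j)}_\nu}_{\varphi_p(z_s)}$, I would build a normalized family $\{f_s\}$ that is bounded in $\mathcal H^{(n+m)}_\nu$, converges to $0$ uniformly on compacta, and is engineered so that its $j$-th derivative at $\varphi(z_s)$ saturates $\|\delta^{\mathcal H^{(n+m-j)}_\nu}_{\varphi_p(z_s)}\|$ while the other derivative orders contribute negligibly; then $\|W_{\psi,\varphi}f_s\|\to0$ by compactness, contradicting the lower bound and establishing \eqref{est_C_A}.

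The main obstacle I anticipate lies in the necessity direction, namely isolating a single derivative order $j$ in the test-function construction: the Newton--Leibniz together with the Fa\`a di Bruno expansion \eqref{eq_est_parial_f_more2} couples all the derivatives $\partial^j f$ for $0\le j\le n$, so I must choose test functions whose $j$-th derivative at $\varphi_p(z_s)$ saturates $\|\delta^{\mathcal H^{(n+m-j)}_\nu}_{\varphi_p(z_s)}\|$ while keeping every lower- and higher-order contribution under control, and I must do this only for $j\ge n_0$, since for $j\le n_0-1$ the relevant quantities are automatically handled by the integrability dichotomy $I^{n+m-n_0}_\nu(1)=\infty$ against $I^{n+m-n_0+1}_\nu(1)<\infty$ through Lemma \ref{lem_I_infty}. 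Correctly aligning this threshold with the stated range $n_0\le j\le n+1$ in \eqref{est_C_A}, and shuttling the $\varphi_p$-conditions back and forth to $\varphi$ through the $\widetilde S_p(\BB)$ geometry and Lemma \ref{lem_B_n}, is where the delicate bookkeeping resides.
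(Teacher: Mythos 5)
Your sufficiency direction (2) $\Rightarrow$ 1)) follows the paper's own proof essentially step for step: the same splitting of the pointwise bound \eqref{eq_est_parial_f_more2} at the threshold $n_0$, with Lemma \ref{lem_I_infty} (applied with $i=n_0-1$, which is exactly what the hypothesis $I^{n+m-n_0+1}_\nu(1)<\infty$ permits) killing the derivative orders $j\le n_0-1$, condition \eqref{est_C_A} controlling the orders $n_0\le j\le n$ after the transfer from $\varphi$ to $\varphi_p$ via $\widetilde S_p(\BB)$, Lemma \ref{lem_B_n} and Remark \ref{rmk_h}, the compact-part/tail split in $|\varphi(z)|$, the boundedness of $\mu(z)|\mathscr B^n_j(\psi;\varphi)(z)|$ from \eqref{eq_B+-}, and the reduction to $\varphi(0)=0$ through $\eta=\varphi\circ\gamma_\alpha$ and Lemma \ref{lem_prepare_3}. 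That half is correct and is the same route as the paper.

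The genuine gap is in the necessity direction 1) $\Rightarrow$ 2). The paper does not build test functions at all: it invokes Theorem 5.2 of \cite{Qu1}, which already characterizes compactness of $W_{\psi,\varphi}:\mathcal H^{(n+m)}_\nu\to\mathcal H^{(n)}_\mu$ by the same vanishing conditions stated for the \emph{full} symbol $\varphi$, and then obtains \eqref{est_C_A} by restriction (if $|\varphi_p(z)|>r$ then $|\varphi(z)|>r$, so the $\varphi_p$-suprema are dominated by the $\varphi$-suprema). Your plan instead re-derives necessity by contradiction, via an engineered bounded family $\{f_s\}$ converging to $0$ on compacta whose $j$-th derivative at $\varphi_p(z_s)$ saturates $\big\|\delta^{\mathcal H^{(n+m-j)}_\nu}_{\varphi_p(z_s)}\big\|$ while all other derivative orders contribute negligibly. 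You never exhibit such functions, and you yourself flag this as the main obstacle; but that construction \emph{is} the entire technical content of the necessity direction (it is what is actually carried out in \cite{Qu1}, with explicit test functions adapted to normal weights and iterated radial derivatives, precisely because the Fa\`a di Bruno expansion couples all orders $0\le j\le n$). As written, your necessity argument is therefore a statement of intent rather than a proof. The gap can be closed either by producing those test functions (nontrivial: for each $j\ge n_0$ one needs $f$ with $\partial^j f$ large at a prescribed point while the coupled lower- and higher-order terms stay controlled, uniformly as $|\varphi_p(z_s)|\to1$), or, as the paper does, by quoting the full-$\varphi$ compactness characterization from \cite{Qu1} and restricting to the component $\varphi_p$.
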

  \begin{proof} 1) $\Rightarrow$ 2): 
It follows from Theorem 5.2 in \cite{Qu1} that $\psi, \psi\cdot\varphi^i_p  \in \mathcal{H}^{(n)}_{\mu}$ for every  $i = 0,1,2 \ldots, $ and and for every $n_0\le j \le  n+1:$
	 \[\lim_{r\to 1}\sup_{|\varphi(z)|>r}\mu(z)\big|\mathscr B^n_{j}(\psi; \varphi)(z)\big|\Big\|\delta_{\varphi(z)}^{\mathcal{H}^{(n+m-j)}_{\nu^p}}\Big\|= 0,\] 
hence,  \eqref{est_C_A}  holds.

2) $\Rightarrow$ 1): As in the case of the boundedness, we also consider two cases.

 $\bullet$ The case  $\varphi(0) = 0.$

  It follows from the assumption 2) and Theorem \ref{thm_A1} that   $W_{\psi, \varphi}:  \mathcal{H}^{(n+m)}_{\nu} \to \mathcal{H}^{(n)}_{\mu}$     is bounded and it folows from $\psi, \psi\cdot\varphi^i_p  \in \mathcal{H}^{(n)}_{\mu}$ for every  $i = 0,1,2 \ldots, $ that \eqref{eq_B+-} holds for every $j=0, 1,\ldots, n.$

Note  first that, by \eqref{eq_est_parial_f}  and Remark  \ref{rmk_h}, in fact, with  an argument analogous to the estimate \eqref{es_final},  we can find $D_j >0$ such that for $|\varphi(z)| >\lambda$
 \[\aligned
 \mu(z)&|R^{(n)}W_{\psi,\varphi}(f)(z)| \lesssim \psi(0)f(\varphi(0))\\
 & \ +\sum_{j=0}^n D_j  \sup_{|\varphi_p(w)|>\lambda}\mu(w)\big|\mathscr B_{j}^n(\psi; \varphi_p)(w)\big|\Big\|\delta_{\varphi_p(w)}^{\mathcal{H}^{(n+m-j)}_\nu}\Big\| \|f\|_{\mathcal{H}^{(n+m)}_\nu}
 \endaligned\]
 for every $f\in \mathcal H^{(n+m)}_\nu.$
 
 Let $\{f_s\}_{s\ge1}$ be a bounded sequence in $\mathcal H^{(n+m)}_{\nu}$ converging to $0$ uniformly on compact subsets of $\BB$ and fix  $\varepsilon > 0.$ Then by Cauchy integral formula and Lemma \ref{lem_I_infty}, we can choose $s_0 \in \N$   such that for $s\ge s_0$ such that
$$ |f_s(\varphi(0))| < \frac{\varepsilon}{2\|\psi\|_{\mathcal H^{(n)}_\mu}}, \quad \sup _{z \in \BB}\bigg|\frac{\partial^j f_s(\varphi(z)) }{\partial z_{\vec{l}}}
 \bigg| < \frac{\varepsilon}{2n_0D_j\mathscr B_{j}^{n-}}$$ 
 for $j = 0, \ldots, i,$ and by the hypothesis there exists $\lambda > 0$ such that for every $n_0 \le j \le n+1$ and for $\lambda < |\varphi_p(z)| < 1,$   
$$   \mu(z)\big|\mathscr B_{j}^n(\psi; \varphi_p)(z)\big|\Big\|\delta_{\varphi_p(z)}^{\mathcal{H}^{(n+m-j)}_\nu}\Big\|   <\frac{\varepsilon}{2(n-n_0+2)D_jK}, $$ 
where $K := \sup_{s\ge1}\|f_s\|_{\mathcal H^{(n+m)}_\nu} <\infty.$ 
Then   for  every $ s\ge s_0 $ and $ |\varphi_p(z)| > \lambda, $ by Lemma \ref{lem_I_infty}, \eqref{eq_est_parial_f}, \eqref{eq_est_parial_f_more2}  and $\psi \in \mathcal H^{(n)}_\mu$ we have
\begin{equation}\label{est_compact1}
\aligned
 \mu(z)&|R^{(n)}W_{\psi,\varphi}(f_s)(z)| \lesssim \psi(0)|f_s(\varphi(0))| \\
 &\  +\sum_{j=0}^{n_0-1} D_j\bigg|\frac{\partial^j f_s(\varphi(z)) }{\partial z_{\vec{l}}}
 \bigg|  \sup_{|\varphi_p(w)|>\lambda}\mu(w)\big|\mathscr B_{j}^n(\psi; \varphi_p)(w)\big|  \\
  &\  +\sum_{j=n_0}^n D_j  \sup_{|\varphi_p(w)|>\lambda}\mu(w)\big|\mathscr B_{j}^n(\psi; \varphi_p)(w)\big|\Big\|\delta_{\varphi_p(w)}^{\mathcal{H}^{(n+m-j)}_\nu}\Big\| \|f_s\|_{\mathcal{H}^{(n+m)}_\nu} \\
 &  \le \|\psi\|_{\mathcal H^{(n)}_{\mu}}\frac{\varepsilon}{2\|\psi\|_{\mathcal H^{(n)}_\mu}} + \sum_{j=0}^{n_0-1} D_j \frac{\varepsilon \mathscr B_{j}^{n-}}{2n_0D_j\mathscr B_{j}^{n-}} + \sum_{j=0}^n D_j \frac{\varepsilon K}{2(n+1)D_jK} = \varepsilon.\\
 \endaligned\end{equation}
  On the other hand, since $\{f_s\}_{s\ge1}$  converges to $0$ uniformly on compact subsets of $\BB,$ by Cauchy integral formula again,  it is clear that
  \[\sup_{|\varphi_p(z)| \le \lambda}\bigg|\frac{\partial^j f_s(\varphi(z)) }{\partial z_{\vec{l}}}
 \bigg|  \to 0 \quad \text{as}\ s \to \infty\]
  for every $j=0, 1, \ldots, n.$
   Then, by \eqref{eq_est_parial_f_more2}, \eqref{eq_B+-}  and   with the estimate as above, we have
 \begin{equation}\label{est_compact2}
	\begin{aligned}
		\sup_{|\varphi(z)|\le \lambda}&\mu(z)|R^{(n)}W_{\psi,\varphi}(f_s)(z)| \\
		&\lesssim \|\psi\|_{\mathcal H^{(n)}_\mu}|f_s(0)|   + \sum_{j=0}^n\mathscr B_{j}^{n-}\sup_{|\varphi_p(z)| \le \lambda}\bigg|\frac{\partial^j f_s(\varphi(z)) }{\partial z_{\vec{l}}}
 \bigg|   \to 0  
	\end{aligned}
\end{equation}
as $s \to \infty.$ Therefore, it follows from Lemma 3.6 in \cite{Qu1} and \eqref{est_compact1}, \eqref{est_compact2} that $ W_{\psi,\varphi} $ is compact.

 $\bullet$ The case  $\varphi(\alpha) = 0,$ $\alpha \in \BB\setminus \{0\}.$
 
 Similar to the reasoning in the proof of Theorem \ref{thm_A1}, we can easily show that  
	 \[\lim_{r\to 1}\sup_{|\eta_p(z)|>r}\mu(z)\big|\mathscr B^n_{j}(\psi; \eta_p)(z)\big|\Big\|\delta_{\eta_p(z)}^{\mathcal{H}^{(n+m-j)}_{\nu}}\Big\|= 0\]
 holds when \eqref{est_C_A} occurs, and thus, the theorem is proved.
  \end{proof}
 
 Now, using Theorem 5.3 in \cite{Qu1} and reasoning as in the proof of the above theorem, we easily obtain the following result. the proofs of which will be omitted.

  \begin{theorem}\label{thm_C2}   Assume that $\varphi  \in \widetilde S_p(\BB)$ for some $p\in \{{1,\ldots,N\}},$ such that  the condition ($n+m, \mu$) satisfied   
				and there exists $n_0 \in \{0, \ldots,n+1\}$    such that
  \[I^{n-n_0+1}_\nu(1) <\infty = I^{n-n_0}_\nu(1).\]
   Then, the  following are equivalent:  
  \begin{enumerate}
	\item[\rm1)]   $ W_{\psi,\varphi}:  \mathcal{H}^{(n)}_{\nu} \to \mathcal{H}^{(n+m)}_{\mu}$ is compact;
	\item[\rm2)] $\psi, \psi\cdot\varphi^i_p  \in \mathcal{H}^{(n+m)}_{\mu}$ for every  $i = 0,1,2 \ldots, $   
   \[\lim_{r\to 1}\sup_{|\varphi(z)|>r}\mu(z)\big|\mathscr B^{n+m}_{j,p}(\psi; \varphi_p)(z)\big|\Big\|\delta_{\varphi_p(z)}^{\mathcal{H}^{(n-j)}_{\nu}}\Big\| =0\quad\text{   for every $j = 0, 1,\ldots, n;$}\] 
  \[\lim_{r\to 1}\sup_{|\varphi(z)|>r}\mu(z)\frac{\big|\mathscr B^{n+m}_{n+k,p}(\psi; \varphi_p)(z)\big|}{\nu(\varphi_p(z))(1-|\varphi_p(z)|^2)^k} =0\quad\text{for every $1\le k \le  m.$}\] 
\end{enumerate}
  \end{theorem}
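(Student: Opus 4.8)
The plan is to follow the proof of Theorem \ref{thm_C1} line by line, substituting Theorem \ref{thm_A2} for Theorem \ref{thm_A1} as the boundedness input and Theorem 5.3 in \cite{Qu1} for Theorem 5.2 in \cite{Qu1} as the full-symbol compactness characterization. For the implication 1)$\Rightarrow$2), Theorem 5.3 in \cite{Qu1} yields the two vanishing conditions written in terms of the full symbol $\varphi$ (the $\delta$-type limits for $0\le j\le n$ and the growth-type limits for $1\le k\le m$); transferring them to the single component $\varphi_p$ is carried out exactly as in the proof of Theorem \ref{thm_C1}, using $\varphi\in\widetilde S_p(\BB)$ together with Lemma \ref{lem_B_n} and Remark \ref{rmk_h}.

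For the substantial direction 2)$\Rightarrow$1), I would first apply Theorem \ref{thm_A2} to deduce that $W_{\psi,\varphi}$ is bounded, so that \eqref{eq_B+-} holds for the coefficients and the quantities $\mathscr B^{n+m}_{j,p}$, $\mathscr B^{n+m}_{n+k,p}$ from \eqref{est_B_4}--\eqref{est_B_5} are finite, and then treat the two cases $\varphi(0)=0$ and $\varphi(\alpha)=0$ with $\alpha\neq0$ separately. In the centered case $\varphi(0)=0$, take a bounded sequence $\{f_s\}_{s\ge1}\subset\mathcal H^{(n)}_\nu$ tending to $0$ uniformly on compact subsets of $\BB$ and estimate $\mu(z)|R^{(n+m)}W_{\psi,\varphi}(f_s)(z)|$ by means of \eqref{eq_est_parial_f_more2} and the estimate from the proof of Theorem 4.2 in \cite{Qu1}. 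The resulting sum over the differentiation order $j$ splits at the threshold $n_0$ fixed by $I^{n-n_0+1}_\nu(1)<\infty=I^{n-n_0}_\nu(1)$: for $0\le j\le n_0-1$ the derivatives $\partial^j f_s(\varphi(z))/\partial z_{\vec{l}}$ vanish uniformly in $z$ by Lemma \ref{lem_I_infty} while their coefficients stay bounded, and for the remaining indices---the $\delta$-type range $n_0\le j\le n$ and the growth range $1\le k\le m$ governed by \eqref{est_B_4} and \eqref{est_B_5}---the hypothesis produces a radius $\lambda<1$ beyond which each term is $<\varepsilon$, whereas on $\{|\varphi_p(z)|\le\lambda\}$ all derivatives of $f_s$ vanish uniformly by the Cauchy integral formula. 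Assembling these as in \eqref{est_compact1} and \eqref{est_compact2} gives $\|W_{\psi,\varphi}f_s\|_{\mathcal H^{(n+m)}_\mu}\to0$, and Lemma 3.6 in \cite{Qu1} upgrades this to compactness.

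For the off-center case $\varphi(\alpha)=0$, $\alpha\neq0$, set $\gamma=\gamma_\alpha$ and $\eta=\varphi\circ\gamma$, so that $\eta(0)=0$ and $\eta_p=\varphi_p\circ\gamma$; then verify that $(\psi,\eta_p)$ satisfies the condition $(n+m,\mu)$ and both vanishing conditions, and conclude via $W_{\psi,\eta}=W_{\psi,\varphi}\circ C_\gamma$ together with the fact that $C_\gamma$ is a homeomorphism of $\mathcal H^{(n)}_\nu$ (Lemma \ref{lem_prepare_3}). The memberships $\psi\cdot\eta^j_p\in\mathcal H^{(n+m)}_{\mu,0}$ and the transfer of the $\delta$-type limits follow from \eqref{formula_psi_phi}, the boundary equivalence \eqref{eq_modul_phi}, and Lemma \ref{lem_B_n}(b), exactly as in \eqref{B_Reta_Rphi}. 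The step I expect to be the main obstacle is the transfer of the growth-type limits \eqref{est_B_5}: unlike the $\delta$-functional factors, the weights $\nu(\varphi_p(z))^{-1}(1-|\varphi_p(z)|^2)^{-k}$ do not pass through $\gamma$ by a clean norm comparison, so one must control the ratio $\nu(\eta_p(z))/\nu(\varphi_p(z))$ and compare $(1-|\eta_p(z)|^2)^k$ with $(1-|\varphi_p(z)|^2)^k$ uniformly on $\{|\eta_p(z)|>r\}$; this is precisely where the estimate \eqref{est_gamma_k} and Lemma \ref{lem_B_n}(c) must be combined carefully to preserve the vanishing as $r\to1$.
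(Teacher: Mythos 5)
Your proposal is correct and matches the paper's intent exactly: the paper omits the proof of Theorem~\ref{thm_C2}, stating only that it follows from Theorem 5.3 in \cite{Qu1} together with the reasoning of Theorem~\ref{thm_C1}, which is precisely the substitution scheme (Theorem~\ref{thm_A2} for Theorem~\ref{thm_A1}, Theorem 5.3 for Theorem 5.2) you carry out. The growth-type transfer you flag as the main obstacle is handled in the paper by the same ratio trick used at the end of the proof of Theorem~\ref{thm_A2}, i.e.\ dividing by an infimum bounded below (as in Lemma~\ref{lem_A1}) and invoking the component-wise bound, so your cautious treatment via \eqref{est_gamma_k} and Lemma~\ref{lem_B_n}(c) is consistent with, if anything more careful than, the intended argument.
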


\section*{Data availability}  The paper does not use any data set. 
\section*{Declarations}
\begin{itemize}
\item Conflict of interest: The author has no conflict of interests related to this work.  
\item Funding: The author did not receive support from any organization for this work. 
\end{itemize}



\end{document}